\documentclass[a4paper,12pt]{amsart}
\usepackage[english]{babel}
\usepackage[utf8]{inputenc}
\usepackage[T1]{fontenc}

\usepackage[style=alphabetic,useprefix,hyperref,backend=bibtex]{biblatex}
\bibliography{./HorofunctionsInHeisebbergBIB}

\usepackage{amssymb}
\usepackage{mathrsfs}
\usepackage{hyperref}
\usepackage[usenames,dvipsnames]{xcolor}
\hypersetup{colorlinks,%
citecolor=OliveGreen,%
filecolor=magenta,%
linkcolor=RoyalBlue,%
urlcolor=cyan}
\usepackage{csquotes}
\usepackage{mparhack}	
\usepackage{comment}	
\usepackage{marvosym}	
\usepackage{enumitem}	
\usepackage{array}		
\usepackage[pdftex]{graphicx}	

\usepackage{cancel}

\newcommand{\scr}[1]{\mathscr{#1}}
\newcommand{\frk}[1]{\mathfrak{#1}}
\newcommand{\bb}[1]{\mathbb{#1}}

\renewcommand{\rm}[1]{\mathrm{#1}}

\newcommand{\N}{\mathbb{N}}	
\newcommand{\Z}{\mathbb{Z}}	
\newcommand{\R}{\mathbb{R}}	
\newcommand{\Co}{\mathscr{C}}	
\newcommand{\dd}{\,\mathrm{d}}	
\newcommand{\de}{\partial}		
\newcommand{\THEN}{\Rightarrow}	
\newcommand{\IFF}{\Leftrightarrow}	

\newcommand{\into}{\hookrightarrow}		

\newcommand{\ci}{\framebox[\width]{$\subset$} }
\newcommand{\ic}{\framebox[\width]{$\supset$} }

\newcommand{\dcc}{d_{CC}}		
\newcommand{\dr}{d_R}	

\newcommand{\HH}{\bb H}	
\newcommand{\hh}{\frk h}	
\newcommand{\hhh}{\hh/[\hh,\hh]} %
\newcommand{\BA}{\scr A}
\newcommand{\mc}{\omega_\HH} 

\theoremstyle{plain}
\newtheorem{Prop}{Proposition}[section]
\newtheorem{Teo}[Prop]{Theorem}
\newtheorem{Lem}[Prop]{Lemma}
\newtheorem{Cor}[Prop]{Corollary}
\newtheorem{Cor*}{Corollary}

\theoremstyle{definition}
\newtheorem{Def}[Prop]{Definition}
\newtheorem{Rem}[Prop]{Remark}





\title
[Asymptotic behavior and horoboundary of \texorpdfstring{$\HH$}{H}]
{Asymptotic behavior of the Riemannian Heisenberg group and its horoboundary}
\author[Le Donne]{Enrico Le Donne}
\author[Nicolussi Golo]{Sebastiano Nicolussi Golo}
\thanks{S.N.G.~is supported by a doctoral fellowship in the project MaNeT}

\address[Le Donne, Nicolussi Golo]{University of Jyvaskyla, Finland}%

\author[Sambusetti]{Andrea Sambusetti}
\address[Sambusetti]{Sapienza Universit\`a di Roma, Italy}

\date{September 1, 2015}
\keywords{Heisenberg group, horoboundary, asymptotic cone, Riemannian geometry, sub-Riemannian geometry}
\subjclass[2010]{20F69, 53C23, 53C17}

\begin{document}
\maketitle

\begin{abstract}
 The paper is devoted to the large scale geometry of the Heisenberg group $\HH$ equipped with left-invariant Riemannian distances.
 We prove that two such distances have bounded difference if and only if they are asymptotic, i.e., their ratio goes to one, at infinity.
 Moreover, we show that for every left-invariant Riemannian distance $d$ on $\HH$ there is a unique subRiemanniann metric $d’$ for which $d-d’$ goes  to zero at infinity, and we estimate the rate of convergence. 
 As a first immediate consequence we get that the Riemannian Heisenberg group is at bounded distance from its asymptotic cone.
 The second consequence, which was our aim, is the explicit description of the horoboundary of the Riemannian Heisenberg group.
\end{abstract}

\setcounter{tocdepth}{1}
\tableofcontents


\section{Introduction}
In large-scale geometry, various notions of a space at infinity have received special interest for differently capturing the asymptotic geometric behavior. 
%
%
%
 Two main examples of spaces at infinity are the asymptotic cone  and the horoboundary.
The description of the 
\emph{asymptotic cone} for finitely generated groups 
is a crucial step 
in the algebraic characterization of groups of polynomial growth, \cite{MR0232311,MR0286898,MR623534,MR0248688,MR741395,MR3153949}.
The notion of \emph{horoboundary}  has been formulated 
by Gromov \cite{MR624814}, inspired by the seminal work of  Busemann on the theory of parallels on geodesic spaces 
\cite{MR0075623}.
The horoboundary  has a fully satisfying visual description in the framework of $CAT (0)$-spaces and  
of Gromov-hyperbolic spaces, \cite{MR919829, MR1377265,  MR1744486}. 
It plays a major role in the study of dynamics and rigidity  of negatively curved spaces,   \cite{Hadamard1898Les-surfaces-a-, MR0385004, MR919829,MR556586, MR0450547, MR2057305}.
The  
visual-boundary description 
breaks down for non-simply connected manifolds  \cite{MR2988483} and when the curvature has variable sign, as we will make evident for the Riemannian Heisenberg group.

%

This paper contributes to the study of the asymptotic geometry of the simplest non-Abelian nilpotent group: the Heisenberg group. 
The asymptotic cone of the Heisenberg group equipped with a left-invariant Riemannian metric $\dr$ is the Heisenberg group equipped with a Carnot-Carathéodory distance $\dcc$, see \cite{MR741395} and also \cite{MR3153949}.
Our contribution is a finer analysis of the asymptotic comparison of $\dr$ and $\dcc$.
This leads to the explicit knowledge of the (Riemannian) horoboundary.
We remark that the Heisenberg group is not hyperbolic, hence one does not consider its visual boundary.



We recall the definition of horoboundary.
Let $(X,d)$ be a metric space. 
We consider the space of continuous real functions $\Co(X)$ endowed with the topology of uniform convergence on compact sets.
We denote by $\Co(X)/\R$ the quotient with respect to the subspace of constant functions.
The embedding $x\mapsto d(x,\cdot)$
induces an embedding $X\into\Co(X)/\R$.
The horoboundary of $X$ is defined as $\de_hX := \bar X\setminus X\subset\Co(X)/\R$.
See Section~\ref{sec24031114} for a detailed exposition.


The horoboundary of finite dimensional normed vector spaces has been 
investigated for normed spaces,  \cite{MR2329698}, 
for Hilbert geometries,  \cite{MR2456635},
and for infinite graphs 
\cite{MR2219016}.
For non-simply connected, negatively curved manifolds it has been studied in \cite{MR2988483}. 
Nicas and Klein computed the horoboundary of the Heisenberg group when endowed with the Korany metric in \cite{MR2546714}, and with the metric $\dcc$ in \cite{MR2738530}.

We will show that the horoboundary of the Heisenberg group endowed with a left-invariant Riemannian metric $\dr$ coincides with the second case studied by Nicas and Klein, see Corollary~\ref{TEO3}.
This will be an immediate consequence of our main result Theorem~\ref{TEO2}, which implies that the difference $\dr-\dcc$ converges to zero as the distances diverge.

\subsection{Detailed results}
The Heisenberg group $\HH$ is the simply connected Lie group whose Lie algebra $\hh$ is generated by three vectors $X,Y,Z$ with only non-zero relation $[X,Y]=Z$.
%
A left-invariant Riemannian metric $d$ on $\HH$ is determined by a scalar product $g$ on $\hh$; a left-invariant \emph{strictly} subRiemannian metric $d$ is induced by a bracket generating plane $V\subset\hh$ and a scalar product $g$ on $V$ (see Section~\ref{sec14031114} for detailed exposition). 
In both cases we say that $d$ is subRiemannian with \emph{horizontal space} $(V,g)$, where $\dim V$ is either 2 or 3;
 subRiemannian metrics are also called Carnot-Carath\'eodory.

We are interested in the asymptotic comparison between these distances.
Given two left-invariant subRiemannian metrics $d$ and $d'$ on $\HH$, we deal with three asymptotic behaviors, in ascending order of strength,  each of which defines an equivalence relation among subRiemannian distances:

\begin{enumerate}[label=(\roman*)]
\item\label{A} 	$\lim_{d(p,q)\to\infty}\frac{d(p,q)}{d'(p,q)} = 1$;
\item\label{AA} 	There exists $c>0$ such that $|d(p,q)-d'(p,q)|< c$, for all $p,q$;
\item\label{AAA} 	$\lim_{d(p,q)\to\infty}|d(p,q)-d'(p,q)|=0$.
\end{enumerate}

A first example of the implication $\ref{A}\THEN\ref{AA}$ was proved by Burago in 
\cite{MR1279391} for $\Z^n$-invariant metrics $d$ on $\R^n$, by showing that $d$ and the associated stable norm stay at bounded distance from each other.
This result has been extended quantitatively 
  for $\Z^n$-invariant metrics on geodesic metric spaces in \cite{2014arXiv1412.6516C}.
 Gromov and  Burago   asked for other interesting cases where the same implication holds. 
Another well-known case where~\ref{A} is equivalent to~\ref{AA} is that of hyperbolic groups.
Beyond Abelian and hyperbolic groups, Krat proved the equivalence  for   word metrics on  the discrete Heisenberg group  $\HH (\Z)$,
\cite{MR1758422}.
For general subFinsler metrics on Carnot groups  it has been proven in \cite{MR3153949}, following \cite{MR1666070}, that~\ref{A} is equivalent to the fact that the projections onto $\HH/[\HH,\HH]$ of the corresponding unit balls coincide, see~\ref{TEO103} below.
Our first result shows that this last condition is equivalent to each one of~\ref{A} and~\ref{AA} in the case of the Heisenberg group endowed with subRiemannian metrics.

\begin{Teo}\label{TEO1}
Let $d$ and $d'$ be two left-invariant subRiemannian distances on $\HH$ whose horizontal spaces are $(V,g)$ and $(V',g')$ respectively. 
Let $\pi:\hh\to\hh/[\hh,\hh]$ be the quotient projection and $\hat\pi:\HH\to\hhh$ the corresponding group morphism.

Then the following assertions are equivalent:
\begin{enumerate}[label=(\alph*)]
\item\label{TEO101} 	there exists $c>0$ such that $|d-d'|< c$;
\vspace{1mm}

\item\label{TEO102}	$\frac{d(p,q)}{d'(p,q)} \to 1$ when $d(p,q)\to \infty$;
\vspace{1mm}

\item\label{TEO103} 	$\hat\pi\left(\{p\in\HH:\ d(0,p)\le R\}\right) = \hat\pi\left(\{p\in\HH:\ d'(0,p)\le R\}\right)$, for all $R>0$, here $0$ denotes the neutral element of $\HH$;
\vspace{1mm}

\item\label{TEO104} 	$\pi\left(\{v\in V:\ g(v,v)\le 1\}\right) = \pi\left(\{v'\in V':\ g'(v',v')\le 1\}\right)$;
\vspace{1mm}

\item\label{TEO105} 	there exists a scalar product $ \bar g $ on $\hh/[\hh,\hh]$ such that both
\vspace{-2mm}
		\[
		\pi|_{V} : (V, g) \to (\hh/[\hh,\hh], \bar g) 
		\quad\text{ and }\quad
		\pi|_{V'} : (V', g') \to (\hh/[\hh,\hh], \bar g) 
		\]

\vspace{-2mm}
are submetries.
\end{enumerate}
\end{Teo}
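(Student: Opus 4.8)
The plan is to prove the implications $\ref{TEO101}\THEN\ref{TEO102}\THEN\ref{TEO104}$, the equivalences $\ref{TEO104}\IFF\ref{TEO103}$ and $\ref{TEO104}\IFF\ref{TEO105}$, and finally the one implication with real content, $\ref{TEO104}\THEN\ref{TEO101}$. The first implication is immediate: if $|d-d'|<c$ then $d'(p,q)\ge d(p,q)-c$ tends to $\infty$ together with $d(p,q)$, and $\bigl|\tfrac{d(p,q)}{d'(p,q)}-1\bigr|\le\tfrac{c}{d'(p,q)}\to0$.

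For $\ref{TEO102}\THEN\ref{TEO104}$ and $\ref{TEO103}\IFF\ref{TEO104}$ the basic object is the \emph{projected norm} $N_d$ on $\hhh\cong\R^2$, namely the Euclidean norm whose unit ball is $\pi\bigl(\{v\in V:g(v,v)\le1\}\bigr)$. An elementary length computation gives, for every $R>0$, the identity $\hat\pi\bigl(\{p:d(0,p)\le R\}\bigr)=\{\xi\in\hhh:N_d(\xi)\le R\}$: the inclusion ``$\supseteq$'' follows because $t\mapsto\exp(tv)$ has $g$-length $|v|_g$ for $v\in V$ (left-invariance of $g$); ``$\subseteq$'' because $d\hat\pi$ carries a left-invariant horizontal vector $v\in V$ to $\pi(v)$ with $N_d(\pi(v))\le|v|_g$, so a $g$-admissible curve of length $L$ projects to a curve of $N_d$-length $\le L$. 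Thus $\ref{TEO103}$ is literally ``$N_d=N_{d'}$'', which is $\ref{TEO104}$. And $\ref{TEO102}\THEN\ref{TEO104}$ as follows: given $\xi$ with $N_d(\xi)=1$, pick $v\in V$ with $\pi(v)=\xi$ and $|v|_g=N_d(\xi)=1$ (the infimum defining $N_d(\xi)$ is attained); then $d(0,\exp(Rv))=R$ for all $R>0$ (the identity forces ``$\ge$'', the ray gives ``$\le$''), so $R\,N_{d'}(\xi)\le d'(0,\exp(Rv))=R(1+o(1))$ as $R\to\infty$, whence $N_{d'}(\xi)\le1$; by symmetry $N_d=N_{d'}$. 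For general subFinsler Carnot metrics this equivalence is also recorded in \cite{MR3153949,MR1666070}.

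For $\ref{TEO104}\IFF\ref{TEO105}$ note that $\pi\bigl(\{v\in V:g(v,v)\le1\}\bigr)$ is always a solid, origin-symmetric ellipse in $\hhh$: if $\dim V=2$ the bracket-generating plane $V$ does not contain $Z$, so $\pi|_V$ is a linear isomorphism and the set is a linear image of an ellipse; if $\dim V=3$ it is the shadow of the ellipsoid $\{g\le1\}$ along $\ker\pi=\R Z$, again an ellipse. A surjective linear map between inner-product spaces is a submetry exactly when it maps the closed unit ball onto the closed unit ball, so $\ref{TEO105}$ asserts precisely that there is an inner product $\bar g$ on $\hhh$ whose unit ball equals both of these ellipses; since every such ellipse is the unit ball of a unique inner product, this is the same as $\ref{TEO104}$.

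It remains to prove $\ref{TEO104}\THEN\ref{TEO101}$, where the work lies. Applying one Lie group automorphism of $\HH$ to the pair $(d,d')$ changes neither $\sup|d-d'|$ nor the validity of $\ref{TEO104}$ (an automorphism induces a linear automorphism of $\hhh$ carrying projected unit balls to projected unit balls); so, with a $\mathrm{GL}_2$-automorphism and then a shear $X\mapsto X+aZ,\ Y\mapsto Y+bZ,\ Z\mapsto Z$ (these induce the identity on $\hhh$ and act transitively on bracket-generating planes), I may assume the common projected unit ball is the round disk and that, for $d$, the relevant plane --- the horizontal plane if $\dim V=2$, the $g$-orthogonal complement of $Z$ if $\dim V=3$ --- is $\langle X,Y\rangle$. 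Then $d$ is either the standard Carnot--Carath\'eodory metric $\dcc$ or a left-invariant Riemannian metric with $\langle X,Y\rangle$ standard and orthogonal to $Z$, whose asymptotic cone is $\dcc$; and $d'$ is $\Phi_*\dcc$ for some shear $\Phi$, or a Riemannian metric whose asymptotic cone is such a $\Phi_*\dcc$. By Theorem~\ref{TEO2} (or a direct estimate like the one below) a left-invariant Riemannian metric is at bounded distance from its asymptotic cone, so it suffices to bound $|\dcc-\Phi_*\dcc|$; unwinding the definitions, and using left-invariance together with the fact that $\Phi$ is an automorphism, this is the uniform estimate
\[
\bigl|\dcc(0,(x,y,z))-\dcc\bigl(0,(x,y,z-ax-by)\bigr)\bigr|\le C .
\]
I would obtain it from two facts: first, $|ax+by|\le C'\sqrt{x^2+y^2}=C'|\hat\pi(x,y,z)|\le C'\dcc(0,(x,y,z))$, because $\hat\pi$ is $1$-Lipschitz onto $(\R^2,\mathrm{eucl})$; second, $\sigma\mapsto\dcc(0,(1,0,\sigma))$ is $C^1$ (the points $(1,0,\sigma)$ miss the cut locus of $0$, which is the vertical axis) with derivative tending to $0$ as $\sigma\to\pm\infty$, hence Lipschitz, so by $\delta$-homogeneity $|\dcc(0,(x,y,z_1))-\dcc(0,(x,y,z_2))|\le L\,|z_1-z_2|/\sqrt{x^2+y^2}$; combining the two gives $C=LC'$. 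This Carnot--Carath\'eodory estimate --- the quantitative statement that a vertical translation moves $\dcc$ by a bounded amount --- is the step I expect to require the most care; the rest is soft.
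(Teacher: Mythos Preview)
Your argument is correct and structurally parallel to the paper's: the chain (a)$\Rightarrow$(b)$\Rightarrow$(d), the equivalences (c)$\Leftrightarrow$(d)$\Leftrightarrow$(e) via the projected norm, and the reduction of (d)$\Rightarrow$(a) to the strictly subRiemannian case by passing to the associated metric (the paper's Proposition~\ref{prop23031006}) all match.

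The one genuine difference is in how you close the loop for two strictly subRiemannian metrics related by a shear. You estimate
\[
\bigl|\dcc(0,(x,y,z))-\dcc(0,(x,y,z-ax-by))\bigr|
\]
directly, using dilation homogeneity, rotational symmetry, and the fact that $\sigma\mapsto\dcc(0,(1,0,\sigma))$ is $C^1$ with bounded derivative (smoothness off the vertical cut locus, derivative vanishing at $\pm\infty$). This is correct, but it calls on analytic regularity of the CC distance. The paper instead observes that your shear $X\mapsto X+aZ$, $Y\mapsto Y+bZ$, $Z\mapsto Z$ is in fact \emph{inner}: it equals $\mathrm{Ad}_{\exp(-bX+aY)}$. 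Hence $d'(0,p)=d(0,gpg^{-1})$ for $g=\exp(-bX+aY)$, and the three-line Lemma~\ref{lem22031443} (pure triangle inequality) gives $|d-d'|\le 2\,d(0,g)$, with an explicit constant. So your route trades a short group-theoretic observation for a dilation/smoothness argument; both work, but the paper's is softer and yields the sharp bound $2|h|$.

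A minor logical remark: you invoke Theorem~\ref{TEO2} to pass from a Riemannian metric to its associated subRiemannian one. In the paper this step is Proposition~\ref{prop23031006}, and Theorem~\ref{TEO2} itself is deduced from that proposition together with Proposition~\ref{prop22031857}; so citing Theorem~\ref{TEO2} here is not circular but is slightly indirect. Your parenthetical ``or a direct estimate like the one below'' already acknowledges this.
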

%

Next, we prove that in every class of the equivalence relation~\ref{AAA} there is exactly one strictly subRiemannian metric.
To every left-invariant subRiemannian metric $d$ we define the \emph{associated subRiemannian metric} $d'$ as follows.
If $d$ is Riemannian
 defined by a scalar product $g$ on $\hh$,
 then $d'$ is
 the strictly subRiemannian metric for which 
the  horizontal space
$V$ is $g$-orthogonal to $[\hh,\hh]$ and the scalar product is $g|_V$. 
If $d$ is strictly subRiemannian, then $d'=d$.

\begin{Teo}\label{TEO2}
 	Let $d$ and $d'$ be two left-invariant subRiemannian distances on $\HH$.
	Their associated subRiemannian metrics are the same if and only if
	\begin{equation}\label{eq03281831}
	\lim_{d(p,q)\to\infty}|d(p,q)-d'(p,q)|=0  .
	\end{equation}
%
	Moreover, if \eqref{eq03281831} holds, then there is $C>0$ such that 
	\begin{equation}\label{eq03251459}
	|d(p,q) - d'(p,q)| \le \frac{C}{d(p,q)} , 
	\qquad\forall p,q\in\HH .
	\end{equation}
\end{Teo}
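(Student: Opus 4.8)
The plan is to reduce Theorem~\ref{TEO2} to a sharp comparison between a left-invariant Riemannian metric $\dr$ and its associated subRiemannian metric $\dcc$, and then use Theorem~\ref{TEO1} to handle the general case. Concretely, given two subRiemannian distances $d$ and $d'$, let $\rho$ and $\rho'$ denote their associated (strictly) subRiemannian metrics. If $\rho = \rho'$, then by the triangle inequality $|d - d'| \le |d - \rho| + |\rho - \rho'| + |\rho' - d'| = |d - \rho| + |d' - \rho'|$, so it suffices to prove the quantitative estimate \eqref{eq03251459} in the special case where $d'$ is the subRiemannian metric associated to $d$; this already gives both the ``only if'' direction and the ``moreover'' part, since $\rho = \rho'$ forces the two bounds to add. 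For the ``if'' direction, suppose \eqref{eq03281831} holds. Since $d - \rho \to 0$ and $d' - \rho' \to 0$ along diverging sequences, we get $\rho - \rho' \to 0$ as $\rho(p,q) \to \infty$ (using that $d$ and $\rho$, being comparable to the same Carnot metric on $\hhh$, diverge together). Then $\rho$ and $\rho'$ are two \emph{strictly} subRiemannian metrics with $\lim_{\rho(p,q)\to\infty}|\rho - \rho'| = 0$; in particular they satisfy condition~\ref{TEO101} of Theorem~\ref{TEO1}, hence~\ref{TEO104}, so the projections of their horizontal unit balls to $\hhh$ agree. For strictly subRiemannian metrics on $\HH$ the horizontal plane is $2$-dimensional and $\pi|_V$ is injective, so equality of the projected balls forces $(V,g) = (V',g')$, i.e.\ $\rho = \rho'$.

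**The heart of the matter** is therefore the quantitative estimate $|\dr(p,q) - \dcc(p,q)| \le C/\dr(p,q)$ when $\dcc$ is associated to $\dr$. By left-invariance we may fix $p = 0$ and write $q$ in exponential coordinates $(x,y,z)$ adapted so that $V = \mathrm{span}(X,Y)$ is the $g$-orthogonal complement of $Z$, with $g|_V$ the chosen inner product and $g(Z,Z) = \lambda^2$. The first inequality, $\dcc \ge \dr$, is immediate: any horizontal curve is admissible for the Riemannian length functional and the Riemannian length of a horizontal curve equals its subRiemannian length (since $g$ restricts to $g|_V$ on $V$). For the reverse direction one must take a Riemannian (near-)geodesic $\gamma$ from $0$ to $q$ and ``horizontalize'' it: replace $\gamma$ by a horizontal curve $\tilde\gamma$ with the same endpoints whose subRiemannian length exceeds $\ell_{\dr}(\gamma)$ by at most $C/\dr(0,q)$. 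The natural construction is to project $\gamma$ to the plane $\hhh \cong \R^2$, obtaining a curve $\bar\gamma$ from $0$ to $(x,y)$, then lift it back horizontally; the lift ends at $(x,y,z')$ where $z' - z$ equals (twice) the signed area between $\bar\gamma$ and the straight segment, and one must correct this area discrepancy. The correction is done by appending a small loop; the key point is that a horizontal loop enclosing area $A$ has subRiemannian length $\asymp \sqrt{|A|}$, and since $|z| \lesssim \dr(0,q)^2$ while $\dr(0,q) \asymp \dcc(0,q)$, the area defect to be corrected is $O(\dr(0,q))$ in the ``worst'' case but the \emph{excess length} contributed is what must be controlled as $O(1/\dr)$.

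**The main obstacle**, and the step requiring real work, is precisely this last length bookkeeping: showing that the horizontalization procedure costs only $O(1/\dr(0,q))$ rather than $O(1)$. The naive estimate ``loop of area $A$ has length $\sqrt{A}$'' is far too lossy. The right approach is perturbative: parametrize the Riemannian geodesic equation for $g = g|_V \oplus \lambda^2 (dz)^2$ and treat $1/\lambda^2$ (equivalently, the deviation of $\gamma$ from being horizontal) as a small parameter when the endpoint is far. One shows that a unit-speed Riemannian geodesic to a far point spends length $O(1/\dr)$ worth of its speed in the vertical ($Z$) direction — because otherwise the horizontal displacement, which must be $\Theta(\dr)$ on the plane $\hhh$ by the asymptotic-cone structure, could not be achieved — so the vertical energy along the geodesic is $O(1)$, and comparing $\gamma$ to its own horizontal projection-and-lift the area defect is $O(1)$, whence the corrective loop has length $O(1)^{1/2}$... this still only gives $O(1)$, so one must be sharper: iterate, or directly estimate $\dcc(0,q) - \dr(0,q)$ by inserting the explicit sub-Riemannian geodesics of $\HH$ (which are known in closed form — lifts of circular arcs) as competitors and expanding. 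I expect the clean route is to use the known exact formula for $\dcc$ on $\HH$, derive from the Riemannian geodesic flow an expansion $\dr(0,q) = \dcc(0,q) - c(q)/\dcc(0,q) + O(\dcc^{-2})$ with $c$ bounded, and verify boundedness of $c$ by the homogeneity/compactness argument that the error term is controlled uniformly on the (compact, after rescaling) asymptotic sphere. Establishing that uniformity — i.e.\ that the $O$ constants do not degenerate near the ``vertical'' directions where $\gamma$ is most non-horizontal — is the crux.
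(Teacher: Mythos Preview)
Your overall reduction---compare each distance to its associated strictly subRiemannian metric, then handle $\dr$ versus its associated $\dcc$---is exactly the paper's strategy. However, both directions of your argument contain genuine gaps.

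\textbf{The ``if'' direction.} Your claim that ``equality of the projected balls forces $(V,g)=(V',g')$'' is false. The map $\pi|_V:V\to\hhh$ is an isomorphism, so the projected ball determines $g$ \emph{given} $V$; but it does not determine the plane $V$ itself. Two distinct planes $V,V'\subset\hh$ transverse to $[\hh,\hh]$, each equipped with the pullback of the same scalar product on $\hhh$, yield different strictly subRiemannian metrics with identical projected balls. Theorem~\ref{TEO1} therefore cannot by itself give $\rho=\rho'$. The paper closes this gap with the sharpness statement \eqref{eq22031439} of Proposition~\ref{prop22031857}: two strictly subRiemannian metrics satisfying the equal-projection hypothesis but with $d\neq d'$ have $\limsup_{p\to\infty}|d(0,p)-d'(0,p)|>0$, contradicting $|\rho-\rho'|\to0$. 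That sharpness is proved by an explicit construction (choosing points $p_R$ along a specific direction and comparing the circular arcs in $\hhh$), and you need something of this kind.

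\textbf{The quantitative estimate.} You correctly set up the projection-and-lift idea and correctly observe that the vertical defect along a Riemannian geodesic of Type~II is $kT/\zeta^2\le 2\pi/\zeta^2=O(1)$. You also correctly note that closing an $O(1)$ area defect by a \emph{loop} costs length $O(1)$, which is too much. The paper's device, which you did not find, is not a loop but a lateral shift: translate the projected arc $\hat\pi\circ\gamma$ by a small vector $\epsilon\lambda^\perp$ and connect the endpoints with two straight segments of length $\epsilon$. This adds a strip of area $\epsilon\cdot\|\lambda\|$ at the cost of extra length $2\epsilon$. Since the horizontal extent satisfies $\|\lambda\|\ge T/\pi$, solving $\epsilon\|\lambda\|=kT/\zeta^2$ gives $\epsilon\le 2\pi^2/(\zeta^2 T)=O(1/\dr)$. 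This is the entire mechanism behind \eqref{eq23031008}; no iteration, expansion, or compactness on the asymptotic sphere is needed. Your proposed alternatives (iterate, explicit expansion, uniformity near vertical directions) would at best reproduce this bound with more effort.
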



We remark that the estimate \eqref{eq03251459} in Theorem~\ref{TEO2} is sharp,  as we will show  in Remark~\ref{sharp}. 

The above result can be interpreted in terms of asymptotic cones.
Namely, if $d$ is a left-invariant Riemannian metric on $\HH$ and $d'$ is the associated subRiemannian metric, 
then $(\HH,d')$ is the asymptotic cone of $(\HH,d)$.
For the analogous result in arbitrary nilpotent groups see \cite{MR741395}. 
By Theorem~\ref{TEO2}, more is true: $(\HH,d)$ is at bounded distance from $(\HH,d')$.
Notice that this consequence cannot be deduced by 
the similar results for discrete subgroups of the Heisenberg group in
 \cite{MR1758422} and \cite{2014arXiv1411.4201D},
 because
  the word metric  is only quasi-isometric to the Riemannian one.
Moreover, we remark that there are examples 
of nilpotent groups of step two that are not at bounded distance from their asymptotic cone,
see \cite{MR3153949}. 

We now focus on the horoboundary.
As a consequence of Theorem~\ref{TEO2} and of the results of Klein-Nikas   \cite{MR2738530}, we get : 

\begin{Cor}\label{TEO3}
	If $\dr$ is a left-invariant Riemannian metric on $\HH$ with associated metric $\dcc$, then the horoboundary of $(\HH,\dr)$ coincides with the horoboundary of  $(\HH,\dcc)$;
	hence, it is homeomorphic to a $2$-di\-men\-sio\-nal closed disk  $\bar D^2$.
\end{Cor}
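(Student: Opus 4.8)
The plan is to deduce Corollary~\ref{TEO3} from Theorem~\ref{TEO2} together with the computation of the horoboundary of $(\HH,\dcc)$ carried out by Klein and Nicas in \cite{MR2738530}. The guiding principle is that the horoboundary, as defined above via the embedding $X\into\Co(X)/\R$ sending $x\mapsto d(x,\cdot)$, is insensitive to perturbations of the metric that vanish at infinity. So the first step is to make this principle precise.

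\medskip

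\emph{Step 1: A stability lemma for horoboundaries.} I would prove (or invoke, if it is standard) the following: if $d$ and $d'$ are two metrics on a set $X$, both inducing the same topology and both proper (closed balls compact), and if $d(p,q)-d'(p,q)\to 0$ as $d(p,q)\to\infty$, then the identity map $\mathrm{id}\colon X\to X$ extends to a homeomorphism $\overline{(X,d)}\to\overline{(X,d')}$ of the respective metric compactifications, carrying $\de_h(X,d)$ onto $\de_h(X,d')$. The argument: fix a basepoint $o$. A point of $\de_h(X,d)$ is the class in $\Co(X)/\R$ of a limit $\lim_n d(\cdot,p_n)-d(o,p_n)$ for some sequence $p_n$ with $d(o,p_n)\to\infty$, the limit being uniform on compact sets. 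Given such a sequence, for each fixed $q$ in a compact set $K$ we have
\[
\bigl(d(q,p_n)-d(o,p_n)\bigr)-\bigl(d'(q,p_n)-d'(o,p_n)\bigr) = \bigl(d(q,p_n)-d'(q,p_n)\bigr)-\bigl(d(o,p_n)-d'(o,p_n)\bigr).
\]
Since $d(o,p_n)\to\infty$ and, for $q\in K$, also $d(q,p_n)\to\infty$ uniformly (as $K$ is compact and the metrics are proper), the hypothesis forces both bracketed terms on the right to tend to $0$ uniformly in $q\in K$. Hence the two Busemann-type functions have the same limit in $\Co(X)/\R$, whenever either exists; this gives a bijection between the horofunctions of $(X,d)$ and those of $(X,d')$, and the same compact-convergence estimate shows it is a homeomorphism. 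One also checks it is compatible with the inclusion of $X$, so it restricts to the identity on $X$ and is a homeomorphism of the full compactifications.

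\medskip

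\emph{Step 2: Application.} By Theorem~\ref{TEO2}, the associated subRiemannian metric of $\dr$ is precisely $\dcc$ (this is the definition of the associated metric when $d$ is Riemannian), and therefore $\lim_{\dr(p,q)\to\infty}|\dr(p,q)-\dcc(p,q)|=0$; in fact the quantitative bound \eqref{eq03251459} holds. Both $\dr$ and $\dcc$ are left-invariant proper geodesic distances on $\HH$ inducing the manifold topology, so the stability lemma of Step~1 applies with $X=\HH$. It yields a homeomorphism $\de_h(\HH,\dr)\cong\de_h(\HH,\dcc)$ intertwining the natural $\HH$-actions (both horoboundaries carry the left-translation action, and the homeomorphism of Step~1 is evidently equivariant since it is induced by the identity). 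Finally, Klein and Nicas \cite{MR2738530} computed $\de_h(\HH,\dcc)$ and showed it is homeomorphic to the closed $2$-disk $\bar D^2$; composing the two identifications gives the claim.

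\medskip

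\emph{Main obstacle.} The only genuinely non-formal point is the uniformity in Step~1: one must argue that for $K$ compact and $p_n\to\infty$, $d(q,p_n)\to\infty$ \emph{uniformly} for $q\in K$, so that the hypothesis ``$d-d'\to 0$ as $d\to\infty$'' can be applied uniformly and the two bracketed error terms are controlled simultaneously over $K$. This follows from the triangle inequality, $d(q,p_n)\ge d(o,p_n)-\operatorname{diam}_d(K\cup\{o\})$, but it is the step where properness and the precise quantifier structure of hypothesis \ref{AAA} are really used; everything else is bookkeeping with the definition of the horoboundary. (A secondary, purely expository task is to confirm that the homeomorphism of Step~1 does carry $X$ to $X$ and hence $\de_hX$ to $\de_hX$ — immediate, since on $X$ itself it is the identity and both $X$'s are dense.)
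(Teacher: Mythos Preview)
Your proposal is correct and follows essentially the same approach as the paper: prove a general stability lemma (if $|d-d'|\to 0$ at infinity then $\de_h(X,d)=\de_h(X,d')$) via the same triangle-inequality splitting of $(d(q,p_n)-d(o,p_n))-(d'(q,p_n)-d'(o,p_n))$, then apply Theorem~\ref{TEO2} and cite Klein--Nicas. Your treatment is in fact slightly more careful than the paper's about the uniformity of convergence over compact $K$, but this is an elaboration of the same argument rather than a different route.
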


%

More precisely, let $g$ be the scalar product of $\dr$ on $\hh$ and $W\subset\hh$ the orthogonal plane to $[\hh,\hh]$.
Define the norm $\|w\|:=\sqrt{g(w,w)}$ on $W$.
Fix a orthonormal basis $(X,Y)$ for $W$ and set $Z:=[X,Y]\in[\hh,\hh]$, so that $(X,Y,Z)$ is a basis of $\hh$.
We identify $\hh\simeq \HH$ via the exponential map, which is a global diffeomorphism.
So, we write $p=w+zZ$ with $w\in W$ and $z\in\R$ for any point $p\in\HH$.
A diverging sequence of points $\{p_n\}_{n\in\N}\subset\HH$, where $p_n=w_n+z_nZ$, diverges:
\begin{enumerate}
\item 	{\em vertically}, if there exists $M<\infty$ such that $\| w_n \| < M$ for all $n$;

\item  {\em non-vertically with quadratic rate $\nu  \in [-\infty, +\infty]$}, if 
	$w_n$ diverges
	and\footnote{
	From the paper \cite{MR2738530}, there is an extra 4 and a change of sign due to our different choice of coordinates.
	} $\lim_{n \rightarrow \infty}  \frac{z_n}{ 4\|w_n\|^2} = -\nu$.
\end{enumerate}
Then, according to \cite{MR2738530} (see Corollary 5.6, 5.9 and 5.13 therein),  we deduce the   following description of the Riemannian horofunctions:
\begin{description}
\item[(v)]	 a vertically diverging sequence $p_n = w_n+z_nZ$ converges to a  horofunction $h$ if and only if $w_n \rightarrow  w_\infty$, and in this case 
	\[
	h (w+zZ) = \|w_\infty \| - \|w_\infty- w \|;
	\]
	
\item[(nv)] 	a  non-vertically diverging sequence $p_n = w_n+z_nZ$  with quadratic rate $\nu$ converges to a horofunction $h$ if and only if  $\frac{w_n}{\|w_n\|} \rightarrow  \hat w$, and  then 
	\[
	h (w+zZ) =  g (R_{\vartheta }(-\hat w),w)
	\]
where $R_{\vartheta }$ is the anti-clockwise rotation in $W$ of angle $\vartheta=\mu^{-1} (\nu )$, and $\mu: [-\pi, \pi] \rightarrow \overline{\R}$ is the extended Gaveau function 
	\[
	\mu( \vartheta) := \frac{\vartheta - \sin \vartheta \cos \vartheta}{\sin^2(\vartheta)} .
	\]
\end{description}
Moreover,  all the horofunctions of $(\HH,\dr)$ are of type {\bf (v)} or  {\bf (nv)},  by Theorem 5.16 in \cite{MR2738530}; it is also clear that neither is of both types.


In section~\ref{sec24031114} we will also determine the Busemann points of  $\de_h(\HH,\dr)$, that is those horofunctions  obtained by points diverging along quasi-geodesics (see Definition~\ref{def08251540}). We obtain, as in the subRiemannian case:
\begin{Cor}\label{corbusemann}
	The Busemann points of $(\HH,\dr)$ are the horofunctions of type \textbf{(nv)} and can be identified to the boundary of the disk $\bar D^2$.
\end{Cor}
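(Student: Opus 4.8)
The plan is to prove the two inclusions between Busemann points and \textbf{(nv)}-horofunctions directly, using only the classification of the horofunctions of $(\HH,\dr)$ recalled above. First I would record that the \textbf{(nv)}-horofunctions are exactly the linear functions $h_u\colon w+zZ\mapsto g(u,w)$, for $u$ in the $g$-unit circle of $W$: since $\mu(0)=0$ we have $R_0=\mathrm{id}$, so already at $\vartheta=0$ the function $w\mapsto g(R_\vartheta(-\hat w),w)$ runs over all the $h_u$ as $\hat w$ varies. Moreover $\|w_\infty\|-\|w_\infty-w\|\to g(u,w)$ uniformly on compact sets as $w_\infty\to\infty$ in the direction $u$, so the $h_u$ are precisely the horofunctions on $\partial\bar D^2$, the \textbf{(v)}-horofunctions $w_\infty\mapsto(\|w_\infty\|-\|w_\infty-\cdot\|)$ filling its interior. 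Hence it suffices to show (i) every $h_u$ is a Busemann point, and (ii) no \textbf{(v)}-horofunction is.

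For (i) I would exhibit a genuine geodesic ray realizing $h_u$. Because $(X,Y)$ is $g$-orthonormal in $W$ and $Z\perp_g W$, the Koszul formula gives $\nabla_v v=0$ for every $v\in W$, so $t\mapsto\exp(-tu)$ is a Riemannian geodesic; it is length minimizing since the abelianization $\hat\pi\colon(\HH,\dr)\to(\hhh,\bar g)$ is $1$-Lipschitz --- taking $\bar g$ so that $\pi|_W$ is a Euclidean isometry we have $\|\pi(v)\|_{\bar g}\le\|v\|_g$ for all $v\in\hh$ --- whence $\dr(0,\exp(-tu))\ge\|\hat\pi(\exp(-tu))\|_{\bar g}=t$, while the ray has length $t$. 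A geodesic ray is a quasi-geodesic in the sense of Definition~\ref{def08251540}, so its Busemann function is a Busemann point, and by case \textbf{(nv)} applied to the sequence $\exp(-nu)=(-nu)+0\cdot Z$ --- which diverges non-vertically with $\hat w=-u$ and quadratic rate $\nu=0$, hence $\vartheta=\mu^{-1}(0)=0$ --- this Busemann function is $w+zZ\mapsto g(R_0(u),w)=h_u$. Thus $\partial\bar D^2$ consists of Busemann points.

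For (ii) I would argue by contradiction. Suppose a \textbf{(v)}-horofunction $h$ is a Busemann point, realized by a quasi-geodesic sequence $q_n=w_n+z_nZ$ with $\dr(0,q_n)\to\infty$. Then $(q_n)$ must diverge vertically: if $\|w_n\|$ were unbounded, a subsequence would diverge non-vertically and, by case \textbf{(nv)}, converge to a \textbf{(nv)}-horofunction, contradicting that no horofunction is of both types. So $\|w_n\|$ is bounded and, by case \textbf{(v)}, $w_n\to w_\infty$, whence $h(q_n)=\|w_\infty\|-\|w_\infty-w_n\|\to\|w_\infty\|$ is bounded. On the other hand, from $h(q_m)=\lim_n\bigl(\dr(q_m,q_n)-\dr(0,q_n)\bigr)$ together with the defining property $\dr(0,q_m)+\dr(q_m,q_n)-\dr(0,q_n)\to 0$ of a quasi-geodesic (valid as $n\ge m\to\infty$), one obtains $h(q_m)+\dr(0,q_m)\to 0$; since $\dr(0,q_m)\to\infty$ this forces $h(q_m)\to-\infty$, a contradiction. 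Therefore the Busemann points are exactly the \textbf{(nv)}-horofunctions, i.e.\ the boundary circle of $\bar D^2$.

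The two small computations --- $\nabla_v v=0$ on $W$, and reading off the coordinates of $\exp(-nu)$ --- are routine. The point I would be most careful about is the implication ``Busemann point $\Rightarrow h(q_m)+\dr(0,q_m)\to 0$ along the realizing quasi-geodesic'', which is the zero detour-cost characterization of Busemann points, and invoking the classification of horofunctions in the correct logical direction: a vertically (resp.\ non-vertically) diverging sequence that converges to a horofunction can only converge to one of type \textbf{(v)} (resp.\ \textbf{(nv)}).
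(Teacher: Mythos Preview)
Your proposal is correct and follows essentially the same route as the paper: horizontal half-lines realize every \textbf{(nv)}-horofunction as a Busemann point, and a \textbf{(v)}-horofunction cannot be Busemann because the detour-cost identity $h(q_m)+\dr(0,q_m)\to 0$ along a realizing sequence is incompatible with the boundedness of $h$ on vertically diverging sequences. Your part (ii) reorders the contradiction (you first pin down that $\|w_n\|$ must stay bounded, then invoke detour-cost; the paper first invokes detour-cost to force $\|w_n\|$ unbounded, then contradicts the \textbf{(v)} type), but the ingredients are identical.

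One point to tighten: the paper's Definition~\ref{def08251540} of an almost-straightly diverging sequence uses the moving base point $p_L$, not $o$, so the condition you call ``the defining property'' --- namely $\dr(0,q_m)+\dr(q_m,q_n)-\dr(0,q_n)\to 0$ --- is not literally the definition but a consequence of it. Deriving $h(q_m)+\dr(0,q_m)\to 0$ from the $p_L$-based inequality takes a short extra argument (this is exactly the content of the paper's Lemma~\ref{lemmabus}(ii)); you were right to flag this as the step requiring care, and your sketched one-line justification should be replaced by that lemma.
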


The paper is organized as follows.
In Section~\ref{sec14031114} we introduce the main objects and their basic properties.
In Section~\ref{sec08251559} we estimate the difference between any two strictly subRiemannian left-invariant distances on $\HH$.
In Section~\ref{sec08241035} we compare any Riemannian left-invariant distance on $\HH$ and its associated distance.
At the end of the section we shall prove Theorems~\ref{TEO1} and~\ref{TEO2}.
In Section~\ref{sec24031114} we concentrate on the horofunctions and we prove Corollaries~\ref{TEO3} and~\ref{corbusemann}.
Appendix~\ref{sec23031017} is devoted to the explicit description of subRiemannian geodesics.

\subsubsection*{Acknowledgments}
The initial discussions for this work were done at the
`2013 Workshop on Analytic and Geometric Group Theory` in Ventotene.
We express our gratitude to the organizers:
A.~Iozzi, G.~Kuhn and M.~Sageev.

\section{Preliminaries}\label{sec14031114}

\subsection{Definitions}
The \emph{first Heisenberg group} $\HH$ is the connected, simply connected Lie group associated to the Heisenberg Lie algebra $\hh$.
The \emph{Heisenberg Lie algebra $\hh$} is the only three dimensional nilpotent Lie algebra that is not commutative.
It can be proven that, for any two linearly independent vectors $X,Y\in\hh\setminus[\hh,\hh]$, the triple $(X,Y,[X,Y])$ is a basis of $\hh$ and $[X,[X,Y]]=[Y,[X,Y]]=0$.

We denote by $\mc:T\HH\to\hh$ the left-invariant Maurer-Cartan form.
Namely, denoting by $0$ the neutral element of $\HH$ and identifying $\hh$ with $T_0\HH$, we have $\mc(v) := \dd L_{p}^{-1}v$ for $v\in T_p\HH$, where $L_p$ is the left translation by $p$.

Let $\pi:\hh\to\hhh$ be the quotient projection.
Notice that $\hhh$ is a commutative 2-dimensional Lie algebra.
So the map $\pi$ induces a Lie group epimorphism $\hat\pi:\HH\to\hhh\simeq\HH/[\HH,\HH]$.

\subsection{SubRiemannian metrics in $\HH$}
Let $V\subset\hh$ be a bracket generating subspace.
We have only two cases: either $V=\hh$ or $V$ is a plane and $\hh=V\oplus[\hh,\hh]$.
In both cases the restriction of the projection $\pi|_V:V\to\hhh$ is surjective.

Let $g$ be a scalar product on $V$ and set the corresponding norm $\|v\|:=\sqrt{g(v,v)}$ for $v\in V$.

An absolutely continuous curve $\gamma:[0,1]\to\HH$ is said \emph{horizontal} if $\mc(\gamma'(t))\in V$ for almost every $t$.
For a horizontal curve we have the \emph{length}
\[
\ell(\gamma) := \int_0^1\|\mc(\gamma'(t))\|\dd t .
\]
A \emph{subRiemannian metric} $d$ is hence defined as
\[
d(p,q) := \inf\left\{\ell(\gamma):\gamma\text{ horizontal curve from $p$ to $q$}\right\} .
\]

SubRiemannian distances on $\HH$ are complete, geodesic, and left-in\-va\-riant.
They are either Riemannian, when $V=\hh$, or \emph{strictly subRiemannian}, when $\dim V=2$.
The pair $(V,g)$ is called the \emph{horizontal space of} $d$.

Since $\pi|_V:V\to \hhh$ is surjective, it induces a norm $\|\cdot\|$ on $\hhh$ such that $\pi:(V,\|\cdot\|)\to(\hhh,\|\cdot\|)$ is an submetry, i.e., for all $w\in\hhh$ it holds $\|\pi(w)\| = \inf\{\|v\|:\pi(v)=w\}$.
Here we use the same notation for norms on $V$ and on $\hhh$, because there will be no possibility of confusion.
The norm on $\hhh$ is characterized by
\begin{equation}\label{eq05081258}
\pi\left(\{v\in V:\|v\|\le R\}\right) 
= \{w\in\hhh:\|w\|\le R\},
\end{equation}
for all $R>0$.
\begin{Prop}\label{prop23030248}
 	Let $d$ be subRiemannian metric on $\HH$ with horizontal space $(V,g)$.
	Then for all $R>0$
	\[
	\pi\left(\{v\in V:\|v\|\le R\}\right) 
	= \hat\pi\left(\{p\in\HH:d(0,p)\le R\}\right) .
	\]
	In particular, $\hat\pi:(\HH,d)\to(\hhh,\|\cdot-\cdot\|)$ is a submetry, i.e., for all $v,w\in\hhh$
	\[
	\|v-w\| = \inf\{d(p,q):\hat\pi(p)=v,\ \hat\pi(q)=w\} .
	\]
\end{Prop}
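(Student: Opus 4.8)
The plan is to reduce the statement to a single integral formula for the abelianization: for every absolutely continuous curve $\gamma:[0,1]\to\HH$ with $\gamma(0)=0$,
\[
\hat\pi(\gamma(1)) = \int_0^1 \pi\bigl(\mc(\gamma'(t))\bigr)\dd t .
\]
First I would establish this. Since $\hhh$ is abelian, under the identification of the Lie group $\hhh$ with its Lie algebra its Maurer--Cartan form is the identity; by functoriality of Maurer--Cartan forms under the group morphism $\hat\pi$, whose differential at $0$ is $\pi$, one gets $\hat\pi^*\mc_{\hhh} = \pi\circ\mc$. Evaluating on $\gamma'(t)$ shows that the curve $\hat\pi\circ\gamma$ in $\hhh\simeq\R^2$ has velocity $\pi(\mc(\gamma'(t)))$ for a.e.\ $t$, and the formula follows from the fundamental theorem of calculus for absolutely continuous functions together with $\hat\pi(0)=0$.

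Granting the formula, I would prove the two inclusions of the displayed identity. For the inclusion ``$\subseteq$'', given $v\in V$ with $\|v\|\le R$, the one-parameter subgroup $\gamma(t):=\exp(tv)$ is a horizontal curve with $\mc(\gamma'(t))\equiv v$, so $\ell(\gamma)=\|v\|\le R$ and hence $d(0,\exp v)\le R$; the integral formula gives $\hat\pi(\exp v)=\pi(v)$, so $\pi(v)$ belongs to the right-hand side. For the inclusion ``$\supseteq$'', given $p$ with $d(0,p)\le R$, I would pick a length-minimizing horizontal curve $\gamma:[0,1]\to\HH$ from $0$ to $p$ (subRiemannian distances on $\HH$ are geodesic) and put $u(t):=\mc(\gamma'(t))\in V$, so that $\int_0^1\|u(t)\|\dd t=\ell(\gamma)=d(0,p)\le R$. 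Since $\pi$ is linear, the formula reads $\hat\pi(p)=\pi\bigl(\int_0^1 u(t)\dd t\bigr)$, and $v:=\int_0^1 u(t)\dd t\in V$ satisfies $\|v\|\le\int_0^1\|u(t)\|\dd t\le R$ by convexity of the norm; thus $\hat\pi(p)=\pi(v)$ lies in the left-hand side.

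For the ``in particular'' part, I would combine the identity just proved with \eqref{eq05081258} to get $\hat\pi(\{p\in\HH:d(0,p)\le R\})=\{w\in\hhh:\|w\|\le R\}$ for all $R>0$. Using left-invariance of $d$ and the fact that $\hat\pi$ is a surjective group morphism, $\inf\{d(p,q):\hat\pi(p)=v,\ \hat\pi(q)=w\}=\inf\{d(0,r):\hat\pi(r)=w-v\}$; and this last infimum equals $\inf\{R>0:w-v\in\hat\pi(\{r:d(0,r)\le R\})\}=\inf\{R>0:\|w-v\|\le R\}=\|w-v\|=\|v-w\|$.

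The step I expect to be the main obstacle is the integral formula for $\hat\pi$: its validity rests on $\hh/[\hh,\hh]$ being abelian (i.e.\ on $\HH$ having step $2$), which is exactly what makes the projected velocity form $\pi\circ\mc$ closed, so that its integral along a path depends only on the endpoints; this is what converts a length — an integral of a norm — into a single vector controlled by Jensen's inequality. Everything after that is routine: convexity of norm balls, bookkeeping with left-invariance, and checking that the formula applies to merely absolutely continuous curves, which the absolutely continuous version of the fundamental theorem of calculus covers without change.
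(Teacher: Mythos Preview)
Your argument is correct and follows essentially the same route as the paper's: both inclusions rest on the identity $(\hat\pi\circ\gamma)'=\pi\circ\mc(\gamma')$, with the one-parameter subgroup $\exp(tv)$ for ``$\subseteq$'' and a minimizing horizontal curve for ``$\supseteq$''. The only cosmetic difference in ``$\supseteq$'' is that the paper bounds $\|\hat\pi(p)\|$ by the length of the projected curve and then tacitly invokes \eqref{eq05081258} to lift back into $V$, whereas you integrate the control $u(t)$ and exhibit the preimage $v=\int_0^1 u(t)\,\dd t\in V$ directly via Jensen; this is the same estimate read in reverse order.

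One small correction to your closing commentary: the quotient $\hh/[\hh,\hh]$ is abelian for \emph{every} Lie algebra, not only step-$2$ ones, so your integral formula $\hat\pi(\gamma(1))=\int_0^1\pi(\mc(\gamma'))\,\dd t$ holds verbatim on any connected Lie group. Nothing about the Heisenberg group being step $2$ is used at that point.
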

\begin{proof}
 	\ci
	Let $v\in V$ with $\|v\|\le R$.
	Set $\gamma(t):=\exp(t v)$.
	Then $\gamma:[0,1]\to\HH$ is a horizontal curve with $d(0,\exp(v))\le\ell(\gamma) =\|v\| \le R$.
	Since $\hat\pi(\exp(v)) =\pi(v)$, then we have proven this inclusion.
	
	\ic
	Let $p\in\HH$ with $d(0,p)\le R$ and let $\gamma:[0,T]\to\HH$ be a $d$-length-minimizing curve from $0$ to $p$ parametrized by arc-length, so $T = d(0,p)$.
	Then $\hat\pi\circ\gamma:[0,T]\to\hhh$ is a curve from $0$ to $\pi(p)$ and
	\begin{align*}
		\|\hat\pi(p)\| 
		&\le \int_0^T\|(\hat\pi\circ\gamma)'(t)\|\dd t \\
		&= \int_0^T \|\pi\circ\mc(\gamma'(t))\| \dd t \\
		&\le \int_0^T \|\mc(\gamma'(t))\| \dd t \\
		&= \ell(\gamma)
		= d(0,p).
	\end{align*}
	In the first equality we used the fact that $\hat\pi$ is a morphism of Lie groups and its differential is $\pi$, i.e., $\omega_{\HH/[\HH,\HH]}\circ\dd\hat\pi = \pi\circ\mc$, where $\omega_{\HH/[\HH,\HH]}$ is the Mauer-Cartan form of $\HH/[\HH,\HH]$.
\end{proof}

\begin{Prop}\label{prop23030249}
 	Let $d,d'$ be two subRiemannian metrics on $\HH$ such that 
	\[
	\lim_{p\to\infty} \frac{d(0,p)}{d'(0,p)} = 1 .
	\]
	Then
	\begin{equation}\label{eq05081521}
	\hat\pi\left(\{p\in\HH:d(0,p)\le R\}\right)
	= \hat\pi\left(\{p\in\HH:d'(0,p)\le R\}\right) .
	\end{equation}
\end{Prop}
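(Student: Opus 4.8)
The plan is to reduce the claim to an equality of norms on $\hhh$ and then prove that equality by a short ``sandwich'' argument. Let $\|\cdot\|$ and $\|\cdot\|'$ denote the norms that $(V,g)$ and $(V',g')$ induce on $\hhh$ via~\eqref{eq05081258}. Combining Proposition~\ref{prop23030248} with~\eqref{eq05081258} gives, for every $R>0$, the \emph{ball identity}
\[
\hat\pi\left(\{p\in\HH:d(0,p)\le R\}\right)=\{u\in\hhh:\|u\|\le R\} ,
\]
and the analogous one for $d'$ with $\|\cdot\|'$. Since the right-hand sides are closed balls centred at $0$ in $2$-dimensional normed planes, to obtain~\eqref{eq05081521} for all $R>0$ it suffices to prove $\|\cdot\|=\|\cdot\|'$. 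So the whole proof reduces to deriving this equality of norms from the hypothesis $d(0,p)/d'(0,p)\to1$.

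To do so I would fix $w\in\hhh$ with $w\ne0$, and for each $n\in\N$ apply the ball identity with $R=n\|w\|$ to choose $q_n\in\HH$ with $\hat\pi(q_n)=nw$ and $d(0,q_n)\le n\|w\|$. Since $\hat\pi\colon(\HH,d)\to(\hhh,\|\cdot\|)$ is a submetry, hence $1$-Lipschitz, one also has $d(0,q_n)\ge\|\hat\pi(q_n)\|=n\|w\|$, so in fact $d(0,q_n)=n\|w\|$; in particular $d(0,q_n)\to\infty$, and since $d$ is proper (subRiemannian distances on $\HH$ are complete and geodesic on the locally compact space $\HH$) the points $q_n$ leave every compact set, so the hypothesis may be applied along $\{q_n\}$ and yields $d(0,q_n)/d'(0,q_n)\to1$. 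As $\hat\pi$ is $1$-Lipschitz for $d'$ as well, $d'(0,q_n)\ge\|\hat\pi(q_n)\|'=n\|w\|'$, and therefore
\[
1=\lim_{n\to\infty}\frac{n\|w\|}{d'(0,q_n)}\le\frac{\|w\|}{\|w\|'} ,
\]
which gives $\|w\|'\le\|w\|$. Interchanging the roles of $d$ and $d'$ throughout (permissible because $d(0,p)/d'(0,p)\to1$ forces $d'(0,p)/d(0,p)\to1$, and $d'$ is proper too) yields the reverse inequality, so $\|w\|=\|w\|'$ for every $w$, and~\eqref{eq05081521} follows from the ball identity.

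I do not expect a genuine obstacle here; the only points needing care are the bookkeeping showing that the auxiliary sequence $\{q_n\}$ tends to infinity — which is where completeness and geodesicity of subRiemannian metrics enter, through properness of $\HH$ — and keeping straight that the norm on $\hhh$ is exactly the one pinned down by~\eqref{eq05081258}, so that Proposition~\ref{prop23030248} really converts the ball statement~\eqref{eq05081521} into a statement about norms. Everything else is the elementary limit estimate displayed above.
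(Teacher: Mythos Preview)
Your proposal is correct and follows essentially the same approach as the paper: reduce \eqref{eq05081521} via Proposition~\ref{prop23030248} and \eqref{eq05081258} to the equality $\|\cdot\|=\|\cdot\|'$ of the induced norms on $\hhh$, then use the submetry property of $\hat\pi$ to realise each norm value as a distance and feed the resulting points into the hypothesis $d/d'\to1$. The paper sandwiches $\|v\|/\|v\|'$ between two ratios tending to $1$ as $v\to\infty$ and then invokes homogeneity, while you build the homogeneity in by taking the sequence $nw$ and derive each inequality separately via symmetry; these are cosmetic differences in presentation of the same argument.
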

\begin{proof}
 	Let $\|\cdot\|$ and $\|\cdot\|'$ be the norms on $\hhh$ induced by $d$ and $d'$, respectively.
	We will show that
	\begin{equation}\label{eq23030240}
	 	\lim_{v\to\infty} \frac{\|v\|}{\|v\|'} = 1,
	\end{equation}
	which easily implies $\|\cdot\|=\|\cdot\|'$ and \eqref{eq05081521} by \eqref{eq05081258} and Proposition~\ref{prop23030248}.
	
	Since both maps $\hat\pi:(\HH,d)\to(\hhh,\|\cdot\|)$ and $\hat\pi:(\HH,d')\to(\hhh,\|\cdot\|')$ are submetries, for every $v\in\hh/[\hh,\hh]$ there are $p_v,p_v'\in\HH$ such that $\hat\pi(p_v)=\hat\pi(p'_v) = v$, $\|v\|=d(0,p_v)$ and $\|v\|'=d'(0,p_v')$. 
	
	Moreover it holds $\|v\|' \le d'(0,p_v)$ and $\|v\|\le d(0,p_v')$, again because $\hat\pi$ is a submetry in both cases.
	Therefore
	\[
	 \frac{d(0,p_v)}{d'(0,p_v)}
	 \le \frac{\|v\|}{\|v\|'} \le 
	 \frac{d(0,p_v')}{d'(0,p_v')}
	\]
	Finally, if $v\to\infty$, then both $d(0,p_v)$ and $d(0,p_v')$ go to infinity as well. 
	The relation \eqref{eq23030240} is thus proven.
\end{proof}

\subsection{Balayage area and lifting of curves}
Let $V\subset\hh$ be a two-dimensional subspace with $V\cap[\hh,\hh]=\{0\}$.
Then $[\hh,\hh]=[V,V]$, i.e., $V$ is bracket generating.
Moreover, $\pi|_V:V\to\hhh$ is an isomorphism.

If $\rho:[0,T]\to\hhh$ is a curve with $\rho(0)=0$, then there is a unique $\tilde\rho:[0,T]\to\HH$ such that
\[
\begin{cases}
 	\tilde\rho(0)=0,\\
	\mc(\tilde\rho'(t)) = \pi|_V^{-1}(\rho(t)') .
\end{cases}
\]
Since $(\pi\circ\tilde\rho)'=\rho'$, then $\pi\circ\tilde\rho=\rho$.
So, $\tilde\rho$ is called the \emph{lift} of $\rho$.

The previous ODE system that defines $\tilde\rho$ can be easily integrated.
Let $X,Y\in V$ be a basis, set $Z:=[X,Y]$, so that $(X,Y,Z)$ is a basis of $\hh$.
Let $(x,y,z)=\exp(xX+yY+zZ)$ be the exponential coordinates on $\HH$ defined by $(X,Y,Z)$.
Using the Backer-Campbell-Hausdorff formula, one shows that $X,Y,Z$ induce the following left-invariant vector fields on $\HH$:
\[
\hat X = \de_x - \frac y2 \de_z,
\qquad
\hat Y = \de_y + \frac x2 \de_z,
\qquad
\hat Z = \de_z .
\]
Thanks to these vector fields, we can describe the Maurer-Cartan form as
\[
\mc(a\hat X + b\hat Y + c\hat Z) = aX+bY+cZ .
\]
The lift of $\rho$ is hence given by the ODE
\[
\begin{cases}
 	\tilde\rho_1' = \rho_1', \\
	\tilde\rho_2' = \rho_2', \\
	\tilde\rho_3' = \frac12 \left(\rho_1\rho_2' - \rho_2\rho_1' \right) .
\end{cases}
\]
Take the coordinates $(x,y)$ on $\hhh$ given by the basis $(\pi(X),\pi(Y))$ and define the \emph{balayage area} of a curve $\rho:[0,T]\to\hhh$ as
\begin{equation}
 	\BA(\rho)= \frac12 \int_\rho (x\dd y - y\dd x) .
\end{equation}
If $\rho(0)=0$, then the balayage area of $\rho$ corresponds to the signed area enclosed between the curve $\rho$ and the line passing through $0$ and $\rho(T)$.

It follows that 
\[
\tilde\rho(t) = \left( \rho_1(t) , \rho_2(t) , \BA(\rho|_0^t) \right) .
\]
In an implicit form we can write
\begin{equation}
\tilde\rho(t) = \exp \left( (\pi|_V)^{-1}(\rho(t)) + \BA(\rho|_0^t) Z \right) .
\end{equation}
Notice that the lift $\tilde\rho$ of a curve $\rho$ depends on the choice of $V$.
Moreover, both the area and the Balayage area in $\hhh$ depend on the choice of the basis $(X,Y)$.
Nevertheless, once a plane $V\subset\hh$ is fixed, the lift $\tilde\rho$ does not depend on the choice of the basis $X,Y$.

If $g$ is a scalar product on $V$ and $d$ is the corresponding strictly subRiemannian metric, the balayage area gives a characterization of $d$-length-minimizing curves. 
Let $\bar g$ be the scalar product on $\hhh$ induced by $g$.
Then the $d$-length of a curve $\tilde\rho:[0,T]\to\HH$ equals the length of $\rho=\pi\circ\tilde\rho$.

Therefore, given $p=(x,y,z)\in\HH$, we have
\[
d(0,p) = \inf\left\{ \ell(\rho):\rho:[0,1]\to\hhh,\rho(0)=0,\rho(1)=\hat\pi(p),\BA(\rho)=z \right\} .
\]
This express the so-called \emph{Dido's problem} in the plane, and the solutions are arc of circles.
It degenerates into a line if $z=0$.
We can summarize the last discussion in the following result.
\begin{Lem}
 	A curve $\tilde\rho:[0,1]\to\HH$ is $d$-length-minimizing from $0$ to $p=(x,y,z)$ if and only if $\rho:=\hat\pi\circ\tilde\rho$ is an arc of a circle from $0$ to $\hat\pi(p)$ with $\BA(\rho)=z$.
\end{Lem}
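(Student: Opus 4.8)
The plan is to transfer the statement, via the lifting construction developed above, to the classical isoperimetric problem (Dido's problem) in the Euclidean plane $(\hhh,\bar g)$. First I would record the two facts already established for a strictly subRiemannian $d$ with horizontal space $(V,g)$, $\dim V=2$: (1) the assignment $\tilde\rho\mapsto\rho:=\pi\circ\tilde\rho$ is a bijection between horizontal curves $\tilde\rho:[0,1]\to\HH$ with $\tilde\rho(0)=0$ and absolutely continuous curves $\rho:[0,1]\to\hhh$ with $\rho(0)=0$, the inverse being the lift, and under it $\tilde\rho(1)=p=(x,y,z)$ holds exactly when $\rho(1)=\hat\pi(p)=(x,y)$ and $\BA(\rho)=z$, since $\tilde\rho(t)=(\rho_1(t),\rho_2(t),\BA(\rho|_0^t))$; (2) because $\bar g$ is defined so that $\pi|_V\colon(V,g)\to(\hhh,\bar g)$ is an isometry, and $\mc(\tilde\rho'(t))=\pi|_V^{-1}(\rho'(t))$, the $d$-length of $\tilde\rho$ equals the $\bar g$-length of $\rho$. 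Combining (1) and (2), a horizontal curve $\tilde\rho$ from $0$ to $p$ is $d$-length-minimizing if and only if $\rho=\hat\pi\circ\tilde\rho$ minimizes $\bar g$-length among all planar curves joining $0$ to $\hat\pi(p)$ and having balayage area $z$ — which is precisely the formula for $d(0,p)$ recalled just before the statement.

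Next I would invoke the solution of Dido's problem in $(\hhh,\bar g)$: among all rectifiable curves from a point $A$ to a point $B$ enclosing, together with the chord $AB$, a prescribed signed area $\sigma$, the length is minimized exactly by the arc of the circle through $A$ and $B$ subtending the region of signed area $\sigma$ — degenerating to the segment $AB$ itself when $\sigma=0$, which one may regard as a circular arc of infinite radius. Conversely, every such circular arc is length-minimizing for its own endpoints and enclosed area. Applying this with $A=0$, $B=\hat\pi(p)$, $\sigma=z$ yields the claim: $\tilde\rho$ is $d$-length-minimizing from $0$ to $p$ if and only if $\rho=\hat\pi\circ\tilde\rho$ is an arc of a circle from $0$ to $\hat\pi(p)$ with $\BA(\rho)=z$.

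The step requiring the most care is the appeal to Dido's problem itself — in particular that the circular arc with given endpoints and given signed enclosed area exists and is the unique minimizer, and the handling of the degenerate case $z=0$, where the minimizer is the straight segment and no genuine circle appears. The remainder is a bookkeeping translation through the lift; left-invariance enters only implicitly, since the statement already fixes the base point at $0$, and one should note that the bijection between $\tilde\rho$ and $\rho$ respects parametrizations only up to reparametrization, which is harmless because length-minimality is parametrization-independent (if one prefers, fix constant-speed parametrizations on both sides).
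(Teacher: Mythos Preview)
Your proposal is correct and follows exactly the approach of the paper: the paper does not give a separate proof of this lemma but presents it as a summary of the preceding discussion, which establishes the length-preserving bijection between horizontal curves from $0$ to $p$ and planar curves from $0$ to $\hat\pi(p)$ with balayage area $z$, and then invokes Dido's problem (with the degenerate case $z=0$ noted). Your write-up simply makes that argument explicit, including the same caveat about the degenerate segment case.
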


\section{Comparison between strictly subRiemannian metrics}\label{sec08251559}

The present section is devoted to comparing strictly subRiemannian distances.
For such distances,
Proposition~\ref{prop22031857} gives the only non-trivial implication in Theorem~\ref{TEO1}.
The general case will follow from Proposition~\ref{prop23031006}.



\begin{Prop}\label{prop22031857}
 	Let $d$ and $d'$ be two strictly subRiemannian metrics on $\HH$ with horizontal spaces $(V,g)$ and $(V',g')$, respectively.
	Suppose that there exists a scalar product $ \bar g $ on $\hh/[\hh,\hh]$ such that both 
		\[
		\pi|_{V} : (V, g) \to (\hh/[\hh,\hh], \bar g) 
		\quad\text{ and }\quad
		\pi|_{V'} : (V', g') \to (\hh/[\hh,\hh], \bar g) 
		\]
	are submetries.
	
	Then
	\begin{equation}\label{eq22031438}
	\sup_{p\in\HH}|d(0,p)-d'(0,p)| < \infty.
	\end{equation}
	
	Moreover, if $d\neq d'$, then 
	\begin{equation}\label{eq22031439}
	\limsup_{p\to\infty} |d(0,p)-d'(0,p)| > 0 .
	\end{equation}
\end{Prop}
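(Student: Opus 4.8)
The plan is to reduce both assertions to one–variable estimates for the model Carnot--Carath\'eodory distance, using the balayage-area description of geodesics. Fix a $\bar g$-orthonormal basis $e_1,e_2$ of $\hhh$ and pull it back to $X:=(\pi|_V)^{-1}e_1$, $Y:=(\pi|_V)^{-1}e_2\in V$ and $X':=(\pi|_{V'})^{-1}e_1$, $Y':=(\pi|_{V'})^{-1}e_2\in V'$; since $\pi|_V,\pi|_{V'}$ are linear bijections which are submetries, they are isometries onto $(\hhh,\bar g)$, so $(X,Y)$ is $g$-orthonormal and $(X',Y')$ is $g'$-orthonormal. Put $Z:=[X,Y]$. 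Then $X'=X+\alpha Z$, $Y'=Y+\beta Z$ for some $\alpha,\beta\in\R$ and $[X',Y']=[X,Y]=Z$, so the two adapted bases share the same vertical generator, and the exponential coordinates they induce differ only in the third slot: a point with coordinates $(x,y,z)$ in the first has coordinates $(x,y,z-\alpha x-\beta y)$ in the second.

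By the Dido characterization of Section~\ref{sec14031114} (the Lemma on length-minimizers and the formula for $d(0,\cdot)$ preceding it), with respect to both coordinate systems lengths and balayage areas are computed in the \emph{same} Euclidean plane $(\hhh,\bar g)$ with the \emph{same} coordinates $(x,y)$, because $\pi(X')=\pi(X)=e_1$ and $\pi(Y')=\pi(Y)=e_2$. Hence, writing
\[
\delta(x,y,z):=\inf\bigl\{\ell(\rho):\rho(0)=0,\ \rho(1)=(x,y),\ \BA(\rho)=z\bigr\}
\]
for the model CC distance from the origin (Euclidean length, Euclidean balayage area), we get $d(0,p)=\delta(x,y,z)$ and $d'(0,p)=\delta(x,y,z-\alpha x-\beta y)$. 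Now $\delta$ is invariant under rotations of the $(x,y)$-plane and satisfies $\delta(\lambda x,\lambda y,\lambda^2 z)=\lambda\,\delta(x,y,z)$, so $\delta(x,y,z)=r\,f(z/r^2)$ for $r:=\sqrt{x^2+y^2}>0$, where $f(u):=\delta(1,0,u)$. Therefore, for every $p=(x,y,z)$ with $r>0$,
\[
d(0,p)-d'(0,p)=r\bigl(f(u)-f(u-h)\bigr),\qquad u:=\tfrac{z}{r^2},\quad h:=\tfrac{\alpha x+\beta y}{r^2},\quad |h|\le \tfrac{C}{r},
\]
with $C:=\sqrt{\alpha^2+\beta^2}$, while $d(0,p)=d'(0,p)$ when $r=0$.

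The crux is then a one–variable \emph{Lemma}: $f\in C^1(\R)$ and $L_f:=\sup_\R|f'|<\infty$. This follows from the explicit solution of Dido's problem: a length-minimizer to $(1,0,u)$ is the lift of a circular arc, and a short computation (as in Appendix~\ref{sec23031017}; cf.\ also \cite{MR2738530}) gives $f(u)=\vartheta/\sin\vartheta$, where $\vartheta\in(-\pi,\pi)$ is determined by $\mu(\vartheta)=4u$ and $\mu(\vartheta)=\frac{\vartheta-\sin\vartheta\cos\vartheta}{\sin^2\vartheta}$ is a strictly increasing odd bijection of $(-\pi,\pi)$ onto $\R$; in particular $f$ is smooth, $f(0)=1$, $f(u)\to\infty$ as $|u|\to\infty$, and an elementary estimate near $\vartheta=\pm\pi$ gives $f'(u)\to 0$ at $\pm\infty$, so $f'$ is bounded. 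Alternatively, one can note that $(1,0,u)$ never meets the $z$-axis — the cut locus of $0$ — so $\delta(0,\cdot)$, hence $f$, is smooth, and combine this with the asymptotics $f(u)\sim 2\sqrt{\pi|u|}$. Granting the Lemma, the mean value theorem gives $|f(u)-f(u-h)|\le L_f|h|$, hence
\[
|d(0,p)-d'(0,p)|\le r\,L_f\,|h|\le C\,L_f\qquad\text{for all }p\in\HH,
\]
which is \eqref{eq22031438}.

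Finally, if $d\ne d'$ then necessarily $(\alpha,\beta)\ne(0,0)$ (otherwise $X'=X$, $Y'=Y$, so $(V',g')=(V,g)$ and $d'=d$), hence $C>0$; and since $f$ is $C^1$ and non-constant there is $u_0$ with $f'(u_0)\ne 0$. Taking the unit vector $(\cos\phi_0,\sin\phi_0):=(\alpha,\beta)/C$ and the diverging family $p_r:=(r\cos\phi_0,\ r\sin\phi_0,\ u_0 r^2)$, we have $u\equiv u_0$ and $h=C/r$, so
\[
d(0,p_r)-d'(0,p_r)=r\bigl(f(u_0)-f(u_0-\tfrac{C}{r})\bigr)=C\cdot\frac{f(u_0)-f(u_0-C/r)}{C/r}\longrightarrow C\,f'(u_0)\neq 0
\]
as $r\to\infty$; hence $\limsup_{p\to\infty}|d(0,p)-d'(0,p)|\ge |C f'(u_0)|>0$, which is \eqref{eq22031439}. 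I expect the only genuine work to be the one–variable Lemma — that $f$ is $C^1$ with derivative vanishing at infinity — everything else being bookkeeping with adapted coordinates and the balayage area.
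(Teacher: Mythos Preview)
Your proof is correct and takes a genuinely different route from the paper's.

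For \eqref{eq22031438}, the paper observes that the two metrics are \emph{conjugate}: choosing $X\in V\cap V'$ with $\|X\|=1$ and $Y\in V$ orthonormal to $X$, one finds $Y':=(\pi|_{V'})^{-1}\pi(Y)=Y+hZ$ for some $h\in\R$, and then $\rm{Ad}_{\exp(hX)}$ carries $(V,g)$ isometrically onto $(V',g')$, so $d'(0,p)=d(0,\exp(hX)\,p\,\exp(-hX))$. A one-line triangle-inequality lemma (Lemma~\ref{lem22031443}) then gives $|d-d'|\le 2|h|$. This is shorter and more conceptual than your analytic reduction, needs no knowledge of the distance formula, and in fact yields the \emph{optimal} constant: the paper later shows the supremum equals $2|h|$ exactly.

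Your reduction to the single function $f(u)=\delta(1,0,u)$ with bounded derivative is more analytic but has the virtue of being uniform: the same device handles both \eqref{eq22031438} and \eqref{eq22031439}. For \eqref{eq22031439} your argument is considerably slicker than the paper's, which builds an explicit family $p_R=(0,2R,\tfrac{\pi R^2}{2}+2hR)$ and compares the balayage areas and lengths of the two geodesic arcs by hand to show the difference tends to $2h$. Your use of $f'(u_0)\neq 0$ along a diverging ray is both shorter and more transparent; the trade-off is that you do not recover the exact value $2|h|$ of either the $\sup$ or the $\limsup$, and you do rely on the smoothness and asymptotics of $f$, which is precisely the ``genuine work'' you flagged.
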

In the proof we will give the exact value of the supremum in \eqref{eq22031438}.
Indeed, by \eqref{eq23031109} and \eqref{eq22031519}, we get $\sup_{p\in\HH}|d(0,p)-d'(0,p)|=2|h|$, where $h$ is defined below.

For \eqref{eq22031438} we will first prove that two of such subRiemannian distances are one the conjugate of the other and then we apply Lemma~\ref{lem22031443}.

For \eqref{eq22031439} we will give a sequence $p_n\to\infty$ and a constant $c>0$ such that $|d(0,p_n)-d'(0,p_n)|>c$ for all $n\in\N$.\\

\subsection{Proof of \eqref{eq22031438}}
Since $\dim V=\dim V'=2$, then $\pi|_V$ and $\pi|_{V'}$ are isomorphisms. 
Therefore by the assumption they are isometries onto $(\hhh,\bar g)$.

Let $X\in V\cap V'$ be with $g(X,X)=1$.
Then $g'(X,X)=1$ as well.

Let $Y\in V$ be orthogonal to $X$ with $g(Y,Y)=1$.
Then $Z:=[X,Y]\neq0$ and $(X,Y,Z)$ is a basis of $\hh$.

Let $Y':=\pi|_{V'}^{-1}(\pi(Y))\in V'$.
Then $g'(Y',Y')=1$ and $g'(X,Y')=0$.
Moreover, there is $h\in\R$ such that $Y'=Y+hZ$.
In particular, $[X,Y']=Z$.

Using the formula $\rm{Ad}_{\exp(hX)}(v) = e^{\rm{ad}_{hX}}v = v + h[X,v]$, we notice that 
\begin{equation}
\begin{cases}
	\rm{Ad}_{\exp(hX)}(X) = X, \\
 	\rm{Ad}_{\exp(hX)}(Y)=Y', \\
	\rm{Ad}_{\exp(hX)}(Z)=Z .
\end{cases}
\end{equation}
In particular $\rm{Ad}_{\exp(hX)}|_V:(V,g)\to(V',g')$ is an isometry.

Therefore, the conjugation
\[
C_{\exp(hX)}(p) := \exp(hX)\cdot p\cdot\exp(hX)^{-1}
\]
is an isometry $C_{\exp(hX)}:(\HH,d)\to(\HH,d')$.

We can now use the following Lemma~\ref{lem22031443} and get
\begin{equation}\label{eq23031109}
\sup_{p\in\HH}|d(0,p)-d'(0,p)| \le 2|h| .
\end{equation}
\begin{Lem}\label{lem22031443}
 	Let $G$ be a group with neutral element $e$ and let $d,d'$ be two left-invariant distances on $G$.
	
	If there is $g\in G$ such that for all $p\in G$
	\[
	d'(e,p) = d(e,gpg^{-1}) ,
	\]
	then for all $p\in G$
	\[
	|d(e,p) - d'(e,p)| \le 2 \max\{d(e,g),d'(e,g)\}.
	\] 
\end{Lem}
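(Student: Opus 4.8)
The plan is to exploit left-invariance to reduce everything to comparing distances from the neutral element, and then to insert the point $g$ as an intermediate waypoint. First I would fix $p\in G$ and write, using only the triangle inequality in $(G,d)$ together with left-invariance,
\[
d(e,gpg^{-1}) \le d(e,g) + d(g,gp) + d(gp,gpg^{-1}) = d(e,g) + d(e,p) + d(e,g^{-1}).
\]
Here the middle term used left-invariance by $g$, and the last term used left-invariance by $gp$. Since $d(e,g^{-1}) = d(g,e) = d(e,g)$ by left-invariance applied to the pair $(e,g^{-1})$ translated by $g$, this gives $d(e,gpg^{-1}) \le d(e,p) + 2d(e,g)$. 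Combining with the hypothesis $d'(e,p) = d(e,gpg^{-1})$ yields $d'(e,p) - d(e,p) \le 2d(e,g)$.

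Next I would run the symmetric argument the other way. The hypothesis can be rewritten as $d(e,p) = d'(e, g^{-1}pg)$ (substitute $g^{-1}pg$ for $p$ in $d'(e,p)=d(e,gpg^{-1})$ and simplify the conjugation). Then the same three-step estimate, now carried out in $(G,d')$ with the waypoint $g^{-1}$, gives $d(e,p) = d'(e,g^{-1}pg) \le d'(e,p) + 2d'(e,g)$, hence $d(e,p) - d'(e,p) \le 2d'(e,g)$. Putting the two inequalities together,
\[
|d(e,p) - d'(e,p)| \le 2\max\{d(e,g), d'(e,g)\},
\]
which is exactly the claim.

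There is no real obstacle here: the only thing to be careful about is bookkeeping with left-invariance, namely remembering that $d(x,y)=d(e,x^{-1}y)$ and that $d(e,g^{-1})=d(e,g)$, and making sure each of the two chains of inequalities uses the distance ($d$ or $d'$) that the hypothesis lets us control via conjugation by the corresponding group element ($g$ or $g^{-1}$). One could alternatively prove a one-sided bound $d'(e,p)-d(e,p)\le 2d(e,g)$ and then note that interchanging the roles of $d$ and $d'$ replaces $g$ by $g^{-1}$ and $d(e,g)$ by $d'(e,g^{-1})=d'(e,g)$, which is perhaps the cleanest way to present it and avoids writing the second chain out in full.
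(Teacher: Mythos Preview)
Your proof is correct and follows essentially the same approach as the paper: both arguments use left-invariance to reduce to the subadditivity property $d(e,ab)\le d(e,a)+d(e,b)$ together with $d(e,a)=d(e,a^{-1})$, insert $g$ (or $g^{-1}$) as a waypoint to obtain a one-sided bound, and then invoke symmetry for the other side. The only cosmetic difference is that the paper writes $p=g^{-1}(gpg^{-1})g$ and applies subadditivity directly, whereas you spell out the triangle inequality along the path $e\to g\to gp\to gpg^{-1}$; the two presentations are equivalent.
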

\begin{proof}
 	Note that, since $d$ is left invariant, then for all $a,b\in G$ we have $d(e,ab) \le d(e,a) + d(e,b)$ and $d(e,a) = d(e,a^{-1})$.
	
	Hence $d(e,p) 
		= d(e, g^{-1}gpg^{-1}g) 
		\le d(e,g^{-1}) + d(e,gpg^{-1}) + d(e,g) 
		= 2 d(e,g) + d'(e,p)$.
	The other inequality follows by symmetry.
\end{proof}

\subsection{Proof of \eqref{eq22031439}}
We keep the same notation of the previous subsection.
Up to switching $V$ with $V'$, we can assume $h>0$.

Let $(x,y,z)$ be the exponential coordinates on $\HH$ induced by the basis $(X,Y,Z)$ of $\hh$, i.e., $(x,y,z) = \exp(xX+yY+zZ) \in\HH$.
Similary, on $\hhh$ we have coordinates $(x,y)=x\pi(X)+y\pi(Y)$.

For $R>0$, define
\[
p_R := \left(0 , 2R , \frac{\pi R^2}{2} + 2hR \right) .
\]
We will show that
\begin{equation}\label{eq22031519}
\lim_{R\to\infty} d(0,p_R) - d'(0,p_R) = 2h .
\end{equation}

\begin{figure}[b]
\includegraphics[width=\textwidth]{PictureOfCC-CC03.pdf}
\caption{Curves in $\hhh$ for the proof of \eqref{eq22031439}.}
\end{figure}

Fix $R>0$.
Let $\gamma:[0,T]\to\HH$ be a $d'$-minimizing curve from $0$ to $p_R$.
Then $\hat\pi\circ\gamma:[0,T]\to\hhh$ 
is half circle of center $(0,R)$ and radius $R$.
The balayage area of $\hat\pi\circ\gamma$ is
\[
\BA(\hat\pi\circ\gamma) 
= \frac{\pi R^2}{2} .
\]

Let $\eta:[0,T]\to\HH$ be the $d$-length-minimizing curve from $0$ to $p_R$.
Then $\hat\pi\circ\eta:[0,T]\to\hhh$ is an arc of a circle of radius $S_R$ whose balayage area is
\begin{equation}\label{CE1156}
 	\BA(\hat\pi\circ\eta) 
= \frac{\pi R^2}{2} + 2Rh 
= \BA(\hat\pi\circ\gamma) + 2Rh .
\end{equation}

It is clear that $S_R>R$ and that
the circle of $\hat\pi\circ\eta$ has center $(\mu_R,R)$ for some $\mu_R>0$.
So we have
\[
S_R^2 = R^2 +\mu_R^2 .
\]
It is also clear from the picture that
\begin{equation}\label{CE1157}
 	\frac{\pi S_R^2}{2} + 2R\mu_R 
	\le \BA(\hat\pi\circ\eta)
	\le \frac{\pi S_R^2}{2} + 2S_R\mu_R .
\end{equation}

Now, let's look at the lengths.
First of all, notice that $\ell_{d'}(\gamma)=\ell(\hat\pi\circ\gamma)$ and $\ell_d(\eta)=\ell(\hat\pi\circ\eta)$.
For one curve we have
\[
\ell(\hat\pi\circ\gamma) = \pi R,
\]
for the other we have the estimate
\[
\pi S_R + 2\mu_R 
\le \ell(\hat\pi\circ\eta),
\]
which is clear from the picture.
Hence
\begin{align*}
	\liminf_{R\to\infty} d(0,p) - d'(0,p)
	&= \liminf_{R\to\infty} \ell(\hat\pi\circ\eta)-\ell(\hat\pi\circ\gamma) \\
	&\ge \lim_{R\to\infty}  \pi S_R + 2\mu_R - \pi R  \\
	&= \lim_{R\to\infty} \pi (S_R - R) + 2\mu_R .
\end{align*}
We claim that
\begin{equation}\label{eq22031826}
 	\lim_{R\to\infty} \pi (S_R - R) + 2\mu_R = 2h .
\end{equation}
Let us start by checking that,
\begin{equation}\label{eq22031828}
 	\mu_R < h .
\end{equation}
Indeed, from the first inequality of \eqref{CE1157} together with \eqref{CE1156} it follows
\[
\frac{\pi S_R^2}{2} + 2R\mu_R 
\le 
\frac{\pi R^2}{2} + 2hR .
\]
Since $S_R>R$, then
\[
0 < \frac{\pi S_R^2}{2} - \frac{\pi R^2}{2} \le 2R (h-\mu_R) ,
\]
i.e., the
inequality \eqref{eq22031828}.

From the second inequality of \eqref{CE1157} together with \eqref{CE1156} we get
\[
\frac{\pi R^2}{2} + 2Rh
\le
\frac{\pi S_R^2}{2} + 2S_R\mu_R .
\]
Using the facts $\mu_R\le h$ and $S_R\le R+\mu_R\le R+ h$, from this last inequality one gets
\begin{align}
0\le
2 (h - \mu_R)
&\le (S_R-R) \frac1R \left(\frac\pi2(S_R+R) + 2\mu_R\right) \nonumber \\
&\le
(S_R-R) \left( \pi + \frac hR ( \frac\pi2 + 2) \right) \label{eq22031851}
\end{align}

Moreover, since $h^2\ge \mu_R^2 = S_R^2 - R^2 = (S_R-R) (S_R+R)$, we also have 
\begin{equation}\label{eq22031854}
\lim_{R\to\infty}(S_R-R) = 0 .
\end{equation}

Finally, from \eqref{eq22031854} and \eqref{eq22031851} we obtain \eqref{eq22031826}, as claimed.
This completes the proof of \eqref{eq22031519} and of Proposition~\ref{prop22031857}.

\section{Comparison between Riemannian and strictly subRiemannian metrics}\label{sec08241035}

Let $\dr$ be a Riemannian metric on $\HH$ with horizontal space $(\hh,g)$.

Let $V\subset\hh$ be the plane orthogonal to $[\hh,\hh]$ and let $\dcc$ be the strictly subRiemannian metric on $\HH$ with horizontal space $(V,g|_V)$.

Fix a basis $(X,Y,Z)$ for $\hh$ such that $(X,Y)$ is an orthonormal basis of $(V,g|_V)$ and $Z=[X,Y]$.
The matrix representation of $g$ with respect to $(X,Y,Z)$ is
\[
g=
\begin{pmatrix}
 	1 & 0 & 0 \\
	0 & 1 & 0 \\
	0 & 0 & \zeta^2
\end{pmatrix}
\]
where $\zeta>0$.

Let $\dcc$ be the strictly subRiemannian metric on $\HH$ with horizontal space $(V,g|_V)$.

Our aim in this section is to prove the following proposition.
\begin{Prop}\label{prop23031006}
 	If $\dcc(0,p)$ is large enough,
	then:
	\begin{equation}\label{eq23031008}
	0\le \dcc(0,p) - \dr(0,p) \le \frac{4\pi^2}{\zeta^2} \frac1{\dcc(0,p) - \frac{2^{3/2}\pi}{\zeta}} .
	\end{equation}
	In particular it holds
	\begin{equation}
	\lim_{p\to\infty} \left|\dcc(0,p) - \dr(0,p)\right| = 0 .
	\end{equation}
\end{Prop}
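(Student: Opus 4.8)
The natural strategy is to compare a $\dr$-geodesic with a $\dcc$-geodesic between the same endpoints, exploiting the fact that the ``extra'' $Z$-direction available in the Riemannian setting costs length proportional to $\zeta$. Since the Riemannian metric dominates no more than the sub-Riemannian one on horizontal curves, the inequality $\dr(0,p)\le\dcc(0,p)$, i.e. the lower bound $0\le\dcc(0,p)-\dr(0,p)$, is immediate: every horizontal curve is also a Riemannian-admissible curve of the same length, so the Riemannian infimum is taken over a larger family. The content is the upper bound, and for this I would start from a $\dr$-geodesic $\gamma\colon[0,T]\to\HH$ from $0$ to $p$ parametrized by arc length, $T=\dr(0,p)$, and write $\mc(\gamma'(t))=a(t)X+b(t)Y+c(t)Z$, so that $a^2+b^2+\zeta^2c^2=1$ a.e. The projected curve $\rho=\hat\pi\circ\gamma$ in $\hhh$ has $\rho'=a\pi(X)+b\pi(Y)$ and Euclidean length $\int_0^T\sqrt{a^2+b^2}\,\dd t\le T$; its balayage area, however, is not the third coordinate of $p$, because $\gamma$ drifts in the $Z$-direction with rate $c(t)$.

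The key bookkeeping step is to control that drift. Writing $p=(x,y,z)$ in the exponential coordinates of $(X,Y,Z)$ and using the lift formula from Section~\ref{sec14031114}, one has $z=\BA(\rho)+\int_0^T c(t)\,\dd t$, where $\BA(\rho)$ is the balayage area of the \emph{horizontal} part of the motion. Thus the horizontal curve we want — the one whose lift ends at $p$ — must enclose area $z$, but $\rho$ only encloses $z-\int_0^T c\,\dd t$. So the plan is: (i) bound $\bigl|\int_0^T c(t)\,\dd t\bigr|$ in terms of $T$ and $\zeta$, using $|c(t)|\le 1/\zeta$ pointwise, which already gives $\bigl|\int_0^T c\bigr|\le T/\zeta$, but more usefully a bound independent of $T$ coming from the fact that a $\dr$-geodesic cannot spend too much ``budget'' on the $Z$-direction without the projection $\rho$ becoming too short to reach $\hat\pi(p)$ — here one uses that $\dcc(0,p)$ large forces $\hat\pi(p)$, hence $\rho(T)$, to be far from $0$, which in turn (via Dido's problem) forces the area $z$ and the length to satisfy a definite relation; (ii) construct an explicit horizontal competitor from $0$ to $p$ by taking the arc of circle through $0$ and $\hat\pi(p)$ with area exactly $z$ (the $\dcc$-geodesic, by the Lemma at the end of Section~\ref{sec14031114}), and estimate its length against $\ell(\rho)\le T$ plus the correction needed to absorb the area discrepancy $\int_0^T c\,\dd t$; a short perturbation of a circular arc changing its area by $\delta$ changes its length by $O(\delta/S)$ where $S\sim\dcc(0,p)$ is the relevant radius, which is exactly the shape of the claimed bound $\frac{4\pi^2}{\zeta^2}\cdot\frac{1}{\dcc(0,p)-2^{3/2}\pi/\zeta}$.

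The main obstacle, and where the constants $4\pi^2/\zeta^2$ and $2^{3/2}\pi/\zeta$ will come from, is step (i): getting a bound on the vertical drift $\int_0^T c\,\dd t$ that is \emph{uniform in $T$} rather than growing linearly. The heuristic is that on a long $\dr$-geodesic the $Z$-velocity $c(t)$ must be small for most of the time, because pushing $c$ toward $\pm1/\zeta$ makes $\sqrt{a^2+b^2}$ drop toward $0$, stalling the horizontal progress that is needed to cover the (large) horizontal distance $\|\hat\pi(p)\|\asymp\dcc(0,p)$; quantitatively one should extract from the geodesic equation (or from a direct variational comparison with the $\dcc$-geodesic, estimating $\dr(0,p)$ from above by $\dcc(0,p)$ and from below by $\ell(\rho)\ge\|\hat\pi(p)\|$) that $\int_0^T|c|\,\dd t$ is bounded by a constant times $1/\zeta^2$. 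Once that uniform bound is in hand, plugging the area discrepancy into the Dido length estimate of step (ii) yields \eqref{eq23031008}, and letting $p\to\infty$ (so $\dcc(0,p)\to\infty$ by completeness and properness of $\dcc$) gives $|\dcc(0,p)-\dr(0,p)|\to0$. I would organize the write-up as: first the trivial inequality; then a lemma bounding the vertical drift of a $\dr$-geodesic; then the competitor construction and the Dido-type length estimate; then assembling \eqref{eq23031008}; finally the limit.
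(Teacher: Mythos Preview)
Your overall architecture matches the paper's: show $\dr\le\dcc$ trivially, start from a $\dr$-minimizing curve $\gamma$, quantify the vertical drift $\int_0^T c$, then build a horizontal competitor absorbing the area defect and compare lengths. Step~(ii) is essentially what the paper does (it appends two short segments of total length $2\epsilon$ to the projected arc, with $\epsilon\|\lambda\|$ equal to the area defect and $\|\lambda\|\ge T/\pi$, giving $2\epsilon\le\frac{4\pi^2}{\zeta^2 T}$).

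The genuine gap is step~(i). Your mechanism for the uniform drift bound is wrong: you write that ``$\dcc(0,p)$ large forces $\hat\pi(p)$ \dots\ to be far from $0$'', but this is false. For $p=(0,0,z)$ one has $\hat\pi(p)=0$ while $\dcc(0,p)=2\sqrt{\pi z}\to\infty$; there is no horizontal progress to stall, yet the drift must still be bounded. So neither the ``horizontal progress'' heuristic nor the comparison $\ell(\rho)\ge\|\hat\pi(p)\|$ yields anything here, and without a uniform bound on $\int_0^T c$ the rest of the argument collapses.

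The paper closes this gap by invoking the explicit Riemannian geodesics (Appendix~\ref{sec23031017}). For $p\notin\{z=0\}$ and $\dcc(0,p)$ large, the $\dr$-minimizer is of \textsc{Type~II}: in the paper's parametrization its projection has unit speed, $c(t)\equiv k/\zeta^2$ is \emph{constant}, and Corollary~\ref{corGeoDr1} (cut time) gives $kT\le 2\pi$. Hence the total drift is exactly $kT/\zeta^2\le 2\pi/\zeta^2$, uniformly in $p$. This is where the constants $4\pi^2/\zeta^2$ and $2^{3/2}\pi/\zeta$ come from (the latter via $\dcc(\eta(T),\gamma(T))=2\sqrt{\pi}\sqrt{kT/\zeta^2}\le 2^{3/2}\pi/\zeta$, which converts the bound in $T$ into one in $\dcc(0,p)$). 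If you want to avoid the explicit geodesics, you would need an independent argument that $\int_0^T c$ is bounded along $\dr$-minimizers---your plan does not yet contain one.
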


For the proof of this statement, we need to know length-minimizing curves for $\dr$ and $\dcc$, and a few properties of those, see the exposition in the Appendix~\ref{sec23031017}.

\begin{proof}
Let $(x,y,z)$ be the exponential coordinates on $\HH$ induced by the basis $(X,Y,Z)$ of $\hh$, i.e., $(x,y,z) = \exp(xX+yY+zZ) \in\HH$.
Fix $p = (p_1,p_2,p_3)\in\HH$.

Notice that both $\dr$ and $\dcc$ are generated as length metrics using the same length measure $\ell$, with the difference that $\dr$ minimizes the length among all the curves, while $\dcc$ takes into account only the curves tangent to $V$.
This implies that
\[
\forall p,q\in\HH
\qquad
\dcc(p,q) \ge \dr(p,q) ,
\]
therefore we get the first inequality in \eqref{eq23031008}.
We need to prove the second inequality of \eqref{eq23031008}.

If $p\in\{z=0\}$, then $\dcc(0,p)=\dr(0,p)$ by Corollary~\ref{cor1103}, and the thesis is true.

Suppose $p\notin\{z=0\}$ and let $\gamma:[0,T]\to\HH$ be a $\dr$-length minimizing curve from $0=\gamma(0)$ to $p=\gamma(T)$.
Since $p\notin\{z=0\}$ and since we supposed $p$ to be far away enough, then by Corollary~\ref{corGeoDr1bis} we can parametrize $\gamma$ in such a way that $\gamma$ is exactly in the form expressed in \textsc{Type II} in Proposition~\ref{teoGeoDr} for some $k>0$ and $\theta\in\R$.

By Corollary~\ref{corGeoDr1} it holds
\begin{equation}\label{eq24031017}
kT \le 2\pi.
\end{equation}
Moreover, by Corollary~\ref{corGeoDr}
\begin{equation}\label{eq1054}
 	\dr(0,p) = \|\mc(\gamma')\|\cdot T =  \sqrt{1+\frac{ k^2}{\zeta^2} } \cdot T.
\end{equation}

Let $\eta:[0,T]\to\HH$ be the $\dcc$-length-minimizing curve corresponding to $\gamma$ as shown in Corollary~\ref{corGeoDr}.
Then we know that $\dcc(0,\eta(T)) = \ell(\eta)=T$, and
\begin{equation}\label{eq1530}
 	p = \gamma(T) = \eta(T) + (0,0,\frac{kT}{\zeta^2}).
\end{equation} 
Hence by Corollary~\ref{corGeoDr} and \eqref{eq24031017}
\[
\dcc(0,p) \le \dcc(0,\eta(T)) + \dcc(\eta(T),\gamma(T)) \le T + \frac{2^{3/2}\pi}{\zeta} ,
\]
i.e.,
\begin{equation}\label{eq1657}
\frac1T \le \frac{1}{\dcc(0,p) - \frac{2^{3/2}\pi}{\zeta}}.
\end{equation}

Since $\eta$ is a $\dcc$-rectifiable curve, then
$
\eta(T)_3 = \BA(\hat\pi\circ\eta) 
$, 
where $\eta(T)_3$ is the third coordinate of the point in the exponential coordinates.
Since $\hat\pi\circ\gamma = \hat\pi\circ\eta$, then we have by \eqref{eq1530}
\begin{equation}\label{eq1552}
p_3 
= \BA(\hat\pi\circ\gamma) + \frac{kT}{\zeta^2}.
\end{equation}

Notice that $\hat\pi\circ\gamma$ is an arc of a circle in $\hhh$ of radius $\frac1k$, see Proposition~\ref{teoGeoDr}.

Now we want to define a horizontal curve $\tilde\rho:[-\epsilon,T+\epsilon]\to\HH$, where $\epsilon>0$ has to be chosen, such that $\tilde\rho(-\epsilon)=0$ and $\tilde\rho(T+\epsilon)=p$.
We first define a curve $\rho:[-\epsilon,T+\epsilon]\to\hhh$ and then take its lift to $\HH$.

\begin{figure}
 	\includegraphics[width=0.49\textwidth]{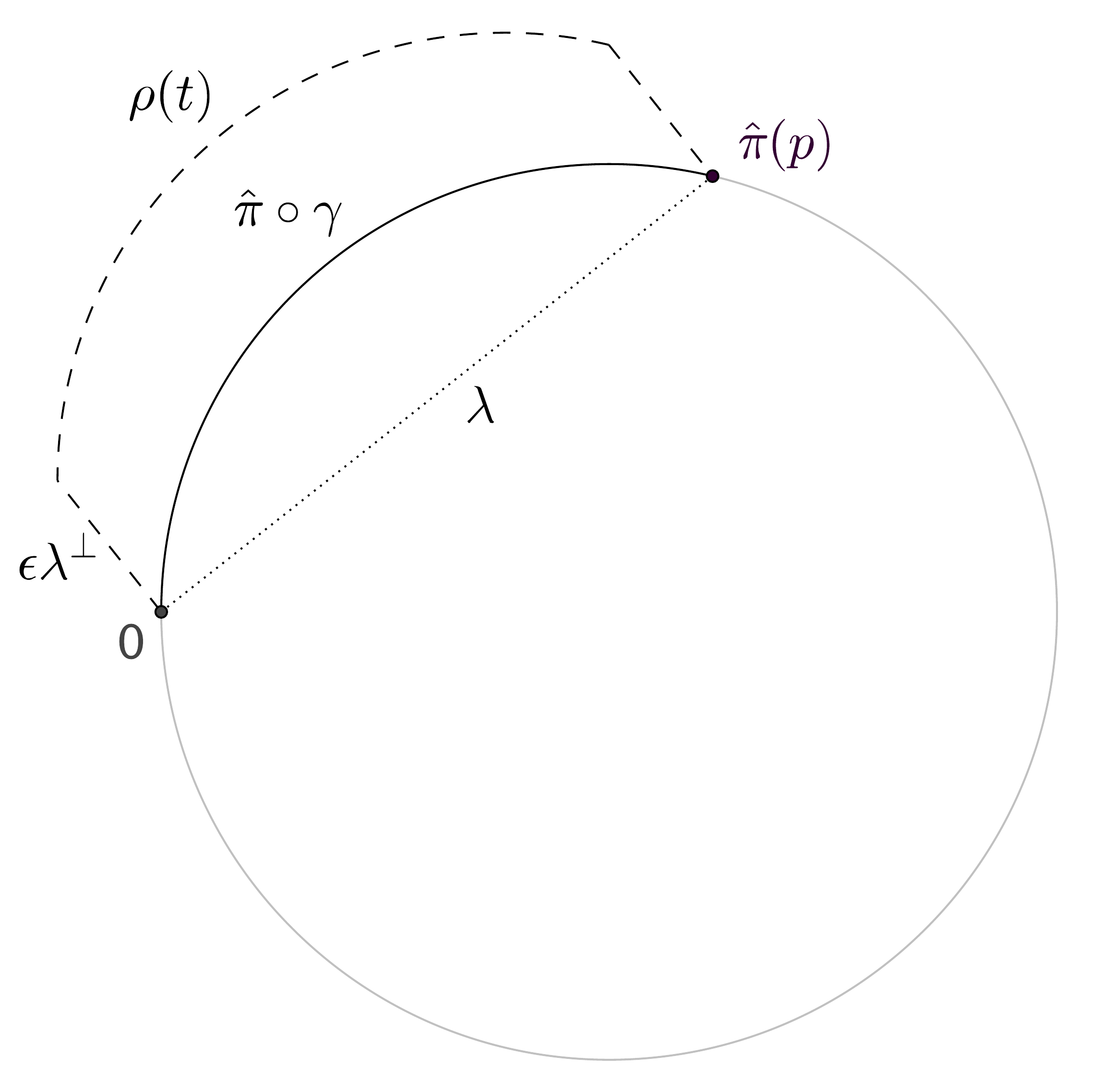}
	\includegraphics[width=0.5\textwidth]{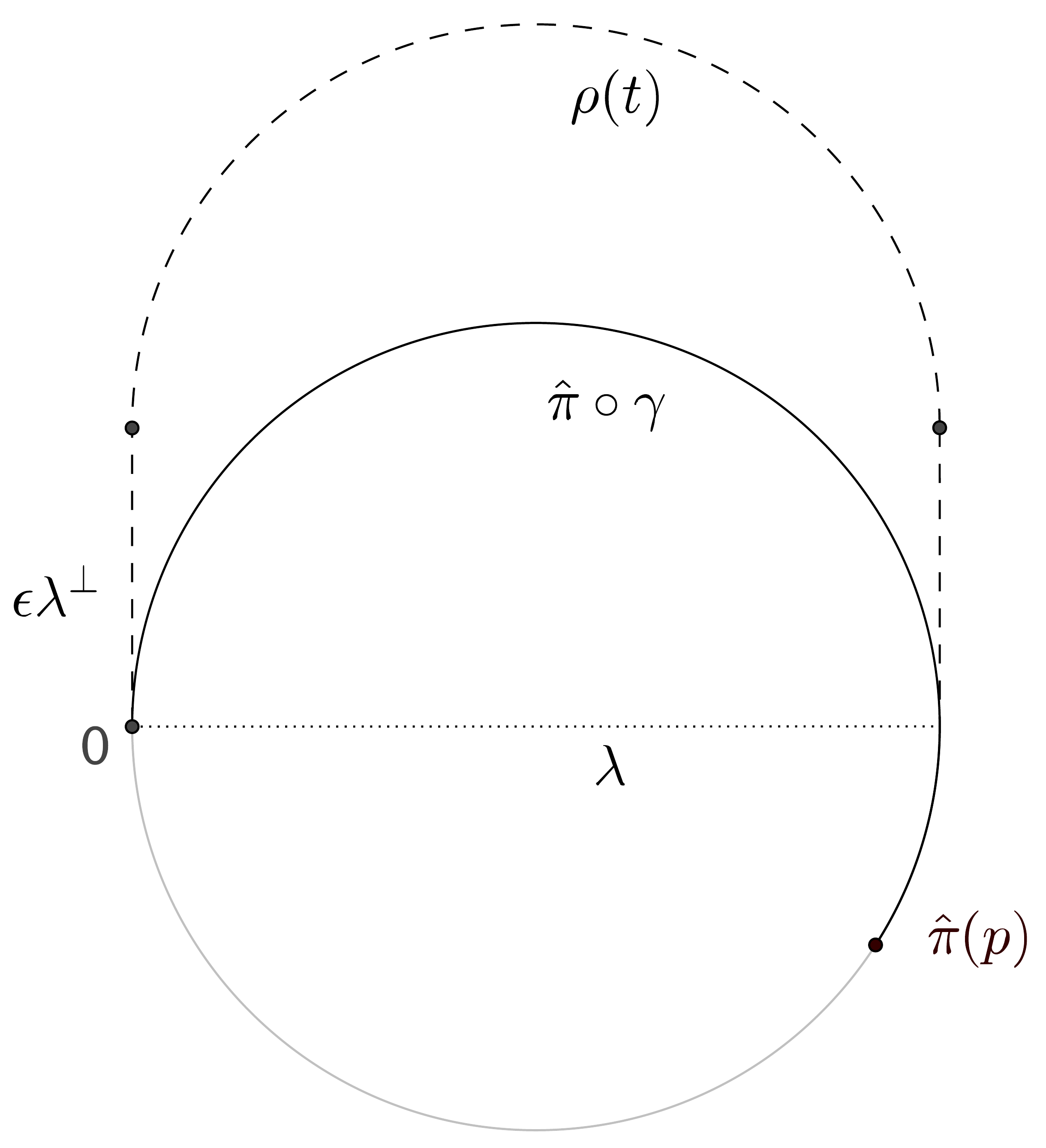}
	\caption{Curves for Case 1 and Case 2.}
\end{figure}

For the definition of $\rho$ we follow two different strategies for two different cases:\\

\textsc{Case 1.}
Suppose that $\hat\pi\circ\gamma$ doesn't cover the half of the circle, i.e., $T\le\frac\pi k$. 
Set $\lambda = \hat\pi(p)\in\hhh$.
Then $T$ is smaller than the circle of diameter $\|\lambda\|$, i.e., 
\begin{equation}\label{eq1110}
\|\lambda\| \ge \frac T\pi .
\end{equation}
Let $\lambda^\perp\in\hhh$ be the unitary vector perpendicular to $\lambda$
and forming an angle smaller than $\pi/2$ with the arc $\hat\pi\circ\gamma$.
Let $\epsilon>0$ such that
\begin{equation}\label{eq1112}
\epsilon \cdot\|\lambda\| = \frac{kT}{\zeta^2} .
\end{equation}
Now, define $\rho:[-\epsilon,T+\epsilon]\to\hhh$ as
\[
\rho(t) = 
\begin{cases}
 	(t+\epsilon) \lambda^\perp							&\text{ for }-\epsilon\le t \le 0 \\
	\epsilon\lambda^\perp + \hat\pi\circ\gamma(t)					&\text{ for }0\le t\le T \\
	\epsilon\lambda^\perp + \hat\pi\circ\gamma(T) - (t-T) \lambda^\perp	&\text{ for }T\le t\le T+\epsilon
\end{cases}
\]
Notice that
\[
\BA(\rho) = \BA(\hat\pi\circ\gamma) + \epsilon\cdot\|\lambda\| = \BA(\hat\pi\circ\gamma) + \frac{kT}{\zeta^2} \overset{\eqref{eq1552}}= p_3
\]
and that $\rho(T+\epsilon) = \hat\pi\circ\gamma(T) = \hat\pi(p)$.
Then the horizontal lift $\tilde\rho:[-\epsilon,T+\epsilon]\to\HH$ of $\rho$ is a $\dcc$-rectifiable curve from $0$ to $p$.\\

\textsc{Case 2.} 
Suppose that $\hat\pi\circ\gamma$ covers more than half of the circle. 
Let $\lambda\in\hhh$ be the diameter of the circle that containing $0$.
Since $T$ is shorter than the whole circle, then
\begin{equation}\label{eq1556}
\|\lambda\| \ge \frac T\pi .
\end{equation}
Let $\lambda^\perp$ be the unitary vector perpendicular to $\lambda$ 
and forming an angle smaller than $\pi/2$ with the arc $\hat\pi\circ\gamma$.
%
Let $\epsilon>0$ be such that 
\begin{equation}\label{eq1113}
\epsilon\cdot \|\lambda\| = \frac{kT}{\zeta^2} . 
\end{equation}
Now, define $\rho:[-\epsilon,T+\epsilon]\to\hhh$ as
\[
\rho(t) =  
\begin{cases}
 	(t+\epsilon) \lambda^\perp	&\text{ for }-\epsilon\le t \le 0 \\
	\epsilon\lambda^\perp + \hat\pi\circ\gamma(t)	&\text{ for }0\le t\le \frac{\pi\|\lambda\|}{2} \\
	\epsilon\lambda^\perp + \lambda 
		- (t-\frac{\pi\|\lambda\|}{2}) \lambda^\perp	&\text{ for }\frac{\pi\|\lambda\|}{2}\le t\le \frac{\pi\|\lambda\|}{2}+\epsilon	\\
	\hat\pi\circ\gamma(t-\epsilon) &\text{ for }\frac{\pi\|\lambda\|}{2}+\epsilon \le t\le T+\epsilon
\end{cases}
\]
where we used the fact $\lambda=\hat\pi\circ\gamma(\frac{\pi\|\lambda\|}{2}) $.
Notice that
\[
\BA(\rho) = \BA(\hat\pi\circ\gamma) + \epsilon\cdot\|\lambda\| = \BA(\hat\pi\circ\gamma) + \frac{kT}{\zeta^2} = p_3 .
\]
Then the horizontal lift $\tilde\rho:[-\epsilon,T+\epsilon]\to\HH$ of $\rho$ is a $\dcc$-rectifiable curve from $0$ to $p$.\\

In both cases $\tilde\rho$ is a horizontal curve from $0$ to $p$ of length 
\begin{equation}\label{eq1654}
 	\ell(\tilde\rho) = T+2\epsilon
\end{equation}

Moreover, from \eqref{eq1110} and \eqref{eq1112} (respectively \eqref{eq1556} and \eqref{eq1113}) we get
\begin{equation}\label{eq1656}
	\epsilon 
	= \frac{kT}{\zeta^2\|\lambda\|}  
	\le \frac{kT}{\zeta^2} \frac\pi T 
	\overset{\eqref{eq24031017}}\le \frac{2\pi}{\zeta^2} \frac\pi T 
	= \frac{2\pi^2}{\zeta^2T}
\end{equation}
Finally using in order \eqref{eq1054}, \eqref{eq1654}, \eqref{eq1656}, \eqref{eq1657}
\begin{multline*}
	\dcc(0,p)-\dr(0,p) 
	\le \ell(\tilde\rho) - \sqrt{1+\frac{ k^2}{\zeta^2} } \cdot T \le \\
	\le T + 2\epsilon - \sqrt{1+\frac{ k^2}{\zeta^2} } \cdot T
	\le 2\epsilon
	\le 2 \frac{2\pi^2}{\zeta^2T}
	\le \frac{4\pi^2}{\zeta^2} \frac1{\dcc(0,p) - \frac{2^{3/2}\pi}{\zeta}} .
\end{multline*}
\end{proof}

\begin{Rem}\label{sharp}
 	The inequality \eqref{eq03251459} is sharp.
	Indeed, for $z\to\infty$, we have the asymptotic equivalence
	\begin{equation}\label{eq03301023}
	 	\dcc(0,(0,0,z)) - \dr(0,(0,0,z)) \sim \frac{4\pi^2}{\zeta^2}  \frac1{\dcc(0,(0,0,z)) }.
	\end{equation}
\end{Rem}
\begin{proof}[Proof of \eqref{eq03301023}]
 	We claim that, for $z>0$ large enough,
	\begin{equation}\label{eq03311609}
	\dr(0,(0,0,z)) = 2\sqrt{\pi} \sqrt{  z - \frac\pi{\zeta^2} } .
	\end{equation}
	Let $\gamma:[0,T]\to \HH$ be a $\dr$-length-minimizing curve from $0$ to $(0,0,z)$.
	Since $z$ is large, we assume that $\gamma$ is of (\textsc{Type II}), see Proposition~\ref{teoGeoDr}, for some $k>0$ and $\theta=0$.
	Since the end point is on the $Z$ axis, we have
	\begin{equation}
	kT = 2\pi 
	\end{equation}
	and $z = \frac{T}{2k} + \frac{kT}{\zeta^2}$, from which follows
	\begin{equation}
	T^2  = 4\pi \left( z - \frac{2\pi}{\zeta^2} \right) .
	\end{equation}
	We know also the length of $\gamma$ (see Corollary~\ref{corGeoDr}) and so we get
	\begin{multline*}
	\dr(0,(0,0,z))
	= \ell(\gamma) 
	= T \|\mc(\gamma')\| 
	= T \sqrt{1 + \frac{k^2}{\zeta^2} }
	= \sqrt{ T^2 + \frac{4\pi^2}{\zeta^2} } \\
	= \sqrt{ 4\pi \left( z - \frac{2\pi}{\zeta^2} \right) + \frac{4\pi^2}{\zeta^2} }
	= 2\sqrt{\pi} \sqrt{  z - \frac\pi{\zeta^2}  }.
%
	\end{multline*}
	Claim \eqref{eq03311609} is proved.
	From Corollary~\ref{corGeoDr2} we get $\dcc(0,(0,0,z)) = 2\sqrt{\pi}\sqrt{z}$ and
	\begin{multline*}
	\dcc(0,(0,0,z)) - \dr(0,(0,0,z)) 
	= 2\sqrt{\pi} \left( \sqrt{z} - \sqrt{  z - \frac\pi{\zeta^2}  }
\right) \\
	= \frac{2\sqrt{\pi}}{\sqrt{z}} \frac{ \frac\pi{\zeta^2}  }
	{ 1 + \sqrt{  1 - \frac\pi{\zeta^2z}  } } 
	= \frac{1}{2\sqrt{\pi}\sqrt{z}} 
		\frac{4\pi^2}{\zeta^2}  
		\frac{ 1 }
	{ 1 + \sqrt{  1 - \frac\pi{\zeta^2z}  } } .
	\end{multline*}
\end{proof}

We are now ready to give the proof of the main theorems: 

\begin{proof}[Proof of Theorem~\ref{TEO1}]
	The implication $\ref{TEO101}\THEN\ref{TEO102}$ is trivial.
	The implication $\ref{TEO102}\THEN\ref{TEO103}$ is proven in Proposition~\ref{prop23030249}.
 	The equivalence $\ref{TEO103}\IFF\ref{TEO104}$ follows from Proposition~\ref{prop23030248}.
	The assertion $\ref{TEO105}$ is a restatement of $\ref{TEO104}$.
	For $\ref{TEO104}\THEN\ref{TEO101}$ one uses Proposition~\ref{prop23031006} in order to reduce to the case when both $d$ and $d'$ are strictly subRiemannian and then one applies Proposition~\ref{prop22031857}.
\end{proof}

\begin{proof}[Proof of Theorem~\ref{TEO2}]
 	This is a consequence of Propositions~\ref{prop23031006} and of the sharpness result \eqref{eq22031439} of Proposition~\ref{prop22031857}.
\end{proof}

\section{The horoboundary} \label{sec24031114}

Let $(X,d)$ be a geodesic space and $\Co(X)$ the space of continuous functions $X\to\R$ endowed with the topology of the uniform convergence on compact sets.
The map $\iota:X\into\Co(X)$, $(\iota(x))(y):=d(x,y)$, is an embedding, i.e., a homeomorphism onto its image.

Let $\Co(X)/\R$ be the topological quotient of $\Co(X)$ with kernel the constant functions, i.e., for every $f,g\in\Co(X)$ we set the equivalence relation $f\sim g\IFF f-g$ is constant.

Then the map $\hat\iota:X\into\Co(X)/\R$ is still an embedding.
Indeed, since the map $\Co(X)\to\Co(X)/\R$ is continuous and open, we only need to show that $\hat\iota$ is injective: if $x,x'\in X$ are such that $\iota(x)-\iota(x')$ is constant, then one takes $z\in Z$ such that $d(x,z)=d(x',z)$, which exists because $(X,d)$ is a geodesic space, and checks that 
\[
d(x,x') = \iota(x)(x')-\iota(x')(x')
= \iota(x)(z)-\iota(x')(z)
= 0 .
\]

Define the \emph{horoboundary of $(X,d)$} as
\[
\de_h X := cl(\hat\iota(X)) \setminus \hat\iota(X) \subset\Co(X)/\R ,
\]
where $cl(\hat\iota(X))$ is the topological closure.

Another description of the horoboundary is possible.
Fix $o\in X$ and set
\[
\Co(X)_o := \{f\in\Co(X):f(o)=0\} .
\]
Then the restriction of the quotient projection $\Co(X)_o\to \Co(X)/\R$ is an isomorphism of topological vector spaces.
Indeed, one easily checks that it is both injective and surjective, and that its inverse map is $[f]\mapsto f-f(o)$, where $[f]\in\Co(X)/\R$ is the class of equivalence of $f\in\Co(X)$.

Hence, we can identify $\de_hX$ with a subset of $\Co(X)_o$.
More explicitly: $f\in\Co(X)_o$ belongs to $\de_hX$ if and only if there is a sequence $p_n\in X$ such that $p_n\to\infty$ (i.e., for every compact $K\subset X$ there is $N\in\N$ such that $p_n\notin K$ for all $n>N$) and the sequence of functions $f_n\in\Co(X)_o$,
\begin{equation}\label{eq03251521}
f_n(x) := d(p_n,x)-d(p_n,o),
\end{equation}
converge uniformly on compact sets to $f$.

\begin{proof}[Proof of  Corollary~\ref{TEO3}.]

Let us first remark that if  $d,d'$ are two geodesic distances on $X$ and
\begin{equation}\label{eq23031323}
\lim_{d(p,q)+d'(p,q)\to\infty}|d'(p,q)-d(p,q)| = 0 .
\end{equation}
then 
\[
\de_h(X,d') = \de_h(X,d) .
\]
Indeed, first of all the space $\Co(X)_o$ depends only on the topology of $X$.
Moreover, if $f\in\de_h(X,d)$, let $p_n\in X$ be a sequence as in \eqref{eq03251521} and set $f'_n(x):=d'(p_n,x)-d'(p_n,o)$.
Then
\[
|f'_n(x)-f_n(x)| \le |d'(p_n,x)-d(p_n,x)| + |d'(p_n,o)-d(p_n,o)| ,
\]
and as a consequence of \eqref{eq23031323} we get $f_n'\to f$ uniformly on compact sets.
This shows $\de_h(X,d)\subset\de_h(X,d')$.
The other inclusion follows by the simmetry of \eqref{eq23031323} in $d$ and $d'$.\\
Now, if $\dr$ and $\dcc$ are distances on $\HH$ like in Corollary~\ref{TEO3}, then \eqref{eq23031323} is easily satisfied thanks to Theorem~\ref{TEO2}, and therefore $\de_h(\HH,\dr) = \de_h(\HH,\dr) $ if the Riemannian metric $\dr$ and  the subRiemannian metric $\dcc$ are compatible. The conclusion follows from \cite{MR2738530}.
\end{proof}
  
The Busemann points in the boundary $\de_h(X,d)$  are usually defined as the horofunctions associated to sequences of points $(p_n)$ diverging along rays or ``almost geodesic rays''.  However, in literature  there are   different definitions of almost geodesic rays, according to the generality of the metric space $(X,d)$ under consideration (\cite{MR1375503}, \cite{MR2015055}, \cite{MR2988483}). 
A map $\gamma: I=[0, +\infty) \rightarrow (X,d)$ into a complete  length space   is called

\begin{itemize}[leftmargin=*]
\item 	a {\em quasi-ray}, if the length excess  
	 \[
	 \Delta_N (\gamma) = \sup_{t,s\in  [N,+\infty]} \ell (\gamma; t,s)- d(\gamma(t), \gamma(s))
	 \]
	 tends to zero for $N \rightarrow +\infty$;
\item 	an  {\em almost geodesic ray}, if 
	\[
	\Theta_N (\gamma)= \sup_{t,s\in [N,+\infty]}    d (\gamma(t), \gamma(s)) +  d (\gamma(s), \gamma(0)) -t
	\]
	tends to zero for $N \rightarrow +\infty$.
\end{itemize}
%
%
%
%
(Notice that the second definition depends on the parametrization, while the first one is intrinsic).
We will use here a  notion of Busemann points which is more general than  both of them:

\begin{Def}\label{def08251540}
	A diverging sequence of points $(p_n)$ in  $(X,d)$ is said to diverge {\em almost straightly} if for all $\epsilon>0$ there exists $L$ such that for every $n \geq m \geq L$ we have 
	\begin{equation}\label{eqstraigth}
		d(p_L,p_m) + d(p_m,p_n) -  d(p_L,p_n) < \epsilon
	\end{equation}
	It is easy to verify that points diverging along a quasi-ray or along an almost-geodesic ray diverge almost straightly.
	We then define a {\em Busemann point} as a  horofunction $f$ which is  the limit of a sequence \linebreak $f_n(x)=d(p_n,x)-d(p_n,o)$, for   points $(p_n)$ diverging  almost straightly.
\end{Def} 

To prove Corollary~\ref{corbusemann}, we need the following 

\begin{Lem}\label{lemmabus}
	Let $(X,d)$ be a boundedly compact, geodesic space, $o\in X$ and $\{p_n\}_{n\in\N}\subset X$ a sequence of points diverging almost straightly.
	Then:
	\begin{enumerate}[label=(\roman*)]
	\item	the sequence $f_n (x)=d(p_n,x)-d(p_n,o)$ converges uniformly on compacts to a horofunction $f$;
	\item	$\lim_{n \rightarrow \infty} f(p_n) + d(o, p_n) =0$.
	\end{enumerate}
\end{Lem}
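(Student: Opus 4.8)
The plan is to prove both statements together, exploiting boundedly compactness for (i) and the almost-straight condition for (ii).

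\textbf{Step 1: Equicontinuity and pointwise bounds.} Each $f_n(x) = d(p_n,x) - d(p_n,o)$ is $1$-Lipschitz in $x$ by the triangle inequality, and $f_n(o) = 0$. Hence on any fixed compact $K \ni o$ the family $\{f_n|_K\}$ is uniformly bounded (by $\mathrm{diam}(K)$) and equicontinuous, so by Arzelà–Ascoli every subsequence has a further subsequence converging uniformly on compacts. To get convergence of the \emph{whole} sequence it suffices to show the pointwise limit $\lim_n f_n(x)$ exists for each $x$; then all subsequential limits agree, and Arzelà–Ascoli upgrades this to uniform convergence on compacts. Since $(X,d)$ is boundedly compact the limit function $f$ is continuous, $f(o)=0$, and since $p_n \to \infty$ one checks in the usual way that $f$ is not of the form $d(\cdot,x)-d(x,o)$, so $f$ is a genuine horofunction.

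\textbf{Step 2: Pointwise convergence via the almost-straight condition.} Fix $x \in X$. I claim $\{f_n(x)\}_n$ is a Cauchy sequence in $\R$. For $n \ge m \ge L$ write, using left nothing special, just the triangle inequality:
\[
f_m(x) - f_n(x) = \big(d(p_m,x) - d(p_n,x)\big) + \big(d(p_n,o) - d(p_m,o)\big).
\]
Now $d(p_m,x) - d(p_n,x) \le d(p_m,p_n)$ and $d(p_n,o) - d(p_m,o) \le d(p_n,p_m) \cdot (\text{crudely}) $ — more carefully, I would estimate
\[
d(p_n,o) - d(p_m,o) \le d(p_n, p_m) \quad\text{and}\quad d(p_m,o) - d(p_n,o) \le -\big(d(p_L,p_n) - d(p_L,p_m) - d(p_m,p_n)\big) + \text{(small)},
\]
so that combining the two directions and invoking \eqref{eqstraigth} with threshold $L$ gives $|f_m(x) - f_n(x)| < \epsilon$ for all $n \ge m \ge L$, independently of $x$ in fact. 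Hence $(f_n)$ is uniformly Cauchy on all of $X$, which simultaneously gives pointwise convergence (feeding back into Step 1) and shows the convergence is uniform on compacts. The key inequality to extract is that almost-straightness forces $d(p_m,o) + d(p_m,p_n) - d(p_L,p_n)$ to be within $\epsilon$ of $d(p_L,p_m)$, which pins down the "Busemann cocycle" behavior.

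\textbf{Step 3: Proof of (ii).} Evaluate $f$ at $p_n$: since $f_k(p_n) = d(p_k,p_n) - d(p_k,o)$, taking $k \to \infty$ gives $f(p_n) = \lim_{k} \big(d(p_k,p_n) - d(p_k,o)\big)$. Then
\[
f(p_n) + d(o,p_n) = \lim_{k\to\infty} \big(d(p_k,p_n) + d(p_n,o) - d(p_k,o)\big).
\]
For $k \ge n \ge L$ the quantity inside is $<\epsilon$ by \eqref{eqstraigth} (with the roles $L \rightsquigarrow$ anything $\le n$; apply it with first index $n$ itself once $n \ge L$, comparing $p_n, p_n, p_k$ is degenerate, so instead apply with indices $L, n, k$ and then let $n$ be large), and it is $\ge -\,(\text{small})$; being careful with which index plays which role, one gets $0 \le f(p_n) + d(o,p_n) \le \epsilon$ for $n \ge L$, hence the limit is $0$.

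\textbf{Main obstacle.} The delicate point is Step 3 and the two-sided estimate in Step 2: the almost-straight condition \eqref{eqstraigth} only controls triples with the \emph{first} index smallest, so one must be careful to apply it with the correct ordering of indices (first a fixed large $L$, then $m$, then $n$) and to recover control of $d(p_m,o) + d(p_m,p_n) - d(p_L,p_n)$ rather than the literal expression appearing in \eqref{eqstraigth}, which involves $d(p_L,p_m)$. Once that bookkeeping is done the rest is routine Arzelà–Ascoli. I would also note that boundedly compact is used precisely to apply Arzelà–Ascoli on each compact and to ensure the limit is a continuous function defined on all of $X$.
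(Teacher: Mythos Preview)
Your overall architecture in Step~1 (Arzel\`a--Ascoli plus uniqueness of subsequential limits) matches the paper, and your Step~3 is in the right spirit. However, Step~2 contains a genuine gap.

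The core problem is that the almost-straight condition only controls triples $(p_L, p_m, p_n)$, whereas your Cauchy estimate for $f_m(x) - f_n(x) = d(p_m,x) - d(p_m,o) - d(p_n,x) + d(p_n,o)$ involves the extraneous points $x$ and $o$. Your displayed inequalities with ``(crudely)'' and ``(small)'' never explain how to eliminate them, and your stated ``key inequality'' --- that $d(p_m,o) + d(p_m,p_n) - d(p_L,p_n)$ is within $\epsilon$ of $d(p_L,p_m)$ --- is simply false: the hypothesis gives $d(p_L,p_m) + d(p_m,p_n) - d(p_L,p_n) < \epsilon$, which says nothing about $d(p_m,o)$ (try $X=\R$, $p_n=n$, $o=0$). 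In fact a two-sided bound $|f_m(x)-f_n(x)|<\epsilon$ for $n\ge m\ge L$ is not available directly, because the condition is asymmetric in the ordering of indices. (Incidentally, your claim that $(f_n)$ is uniformly Cauchy on all of $X$ is also false; already for $X=\R$, $p_n=n$, $o=0$ one has $f_n(2n)-f(2n)=2n$.)

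The paper's remedy is to recenter at $p_L$ rather than at $o$, and to compare two \emph{subsequential} limits rather than attempt a direct Cauchy estimate. Given subsequential limits $g,g'$, set $g_L:=g-g(p_L)$ and $g'_L:=g'-g'(p_L)$. For indices $n_i\ge n'_j\ge L$ taken from the two subsequences, the triangle inequality plus almost-straightness give
\[
d(p_{n_i},x)-d(p_{n_i},p_L)-d(p_{n'_j},x)+d(p_{n'_j},p_L)\ \le\ d(p_{n_i},p_{n'_j})-d(p_{n_i},p_L)+d(p_{n'_j},p_L)\ <\ \epsilon,
\]
and the reverse inequality follows by \emph{swapping the roles of the two subsequences}, which restores the required ordering. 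This yields $|g_L-g'_L|\le\epsilon$, hence $|g-g'-R_\epsilon|\le\epsilon$ with $R_\epsilon=g(p_L)-g'(p_L)$; evaluating at $o$ forces $|R_\epsilon|\le\epsilon$, so $g=g'$. The recentering at $p_L$ is exactly what lets the almost-straight hypothesis bite, and the two-subsequence device supplies the symmetry your direct approach lacks.

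Your Step~3 also overshoots slightly: you assert $0\le f(p_n)+d(o,p_n)\le\epsilon$ for all $n\ge L$, but after inserting $p_L$ and letting $k\to\infty$ one actually obtains $f(p_n)+d(o,p_n)\le\epsilon+f(p_L)-f_n(p_L)$, whose right side only tends to $\epsilon$ as $n\to\infty$. A second passage to the limit in $n$ is therefore required, exactly as in the paper.
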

\begin{proof}
	Since the 1-Lipschitz functions $f_n$ are uniformly bounded on compact sets and $(X,d)$ is boundedly compact,
	then the family $\{f_n\}_{n\in\N}$ is pre-compact with respect to the uniform convergence on compact sets.
	Hence, if we prove that there is a unique accumulation point, then we obtain that the whole sequence $\{f_n\}_{n\in\N}$ converges.
	
	So, let $g,g'\in\Co^0(X)$ and let $\{f_{n_k}\}_{k\in\N}$ and $\{f_{n'_k}\}_{k\in\N}$ be two subsequences of $\{f_n\}_{n\in\N}$ such that $f_{n_k}\to g$ and $f_{n_k'}\to g'$ uniformly on compact sets.
	We claim
	\begin{equation}\label{eq08241859}
	\forall \epsilon>0\ \exists R_\epsilon \in \R\ \forall x\in X \quad |g'(x)+R_\epsilon-g(x)| \le \epsilon .
	\end{equation}
	Let $\epsilon>0$.
	Let $L\in\N$ be such that \eqref{eqstraigth} holds. 
	Define
	\begin{align*}
	 	g_L(x) &:= \lim_{k\to\infty} d(p_{n_k},x) - d(p_{n_k},p_L) = g(x)-g(p_L) \\
		g'_L(x) &:= \lim_{k\to\infty} d(p_{n'_k},x) - d(p_{n'_k},p_L) = g'(x)-g'(p_L) .
	\end{align*}
	Then for $n_i\ge n'_j\ge L$, we get
	\begin{multline*}
	 	d(p_{n_i},x) - d(p_{n_i},p_L) - d(p_{n'_j},x) + d(p_{n'_j},p_L) \\
		\le d(p_{n_i}, p_{n'_j}) - d(p_{n_i},p_L) + d(p_{n'_j},p_L) 
		\le \epsilon .
	\end{multline*}
	By taking the limit $i\to\infty$ and $j\to\infty$, we obtain for all $x\in X$
	\[
	g_L(x)-g'_L(x) \le \epsilon .
	\]
	Thanks to the symmetry of the argument, also $g'_L(x)-g_L(x) \le \epsilon$ holds.
	Therefore for all $x\in X$
	\[
	\epsilon 
	\ge |g'_L(x)-g_L(x)|
	= |g'(x) - g(x) + g(p_L) - g'(p_L)|.
	\]
	Setting $R_\epsilon = g(p_L) - g'(p_L)$, we conclude the proof of claim \eqref{eq08241859}.
	
	It is now easy to conclude from \eqref{eq08241859}.
	Indeed, taking $x=o$, we have $|R_\epsilon|\le \epsilon$, therefore for all $\epsilon>0$ and for all $x\in X$
	$
	|g(x)-g'(x)| \le 2\epsilon ,
	$
	i.e., $g=g'$.
	This completes the proof of $(i)$.
	
	To prove assertion (ii), fix $\epsilon>0$ and let $L\in\N$ be as above.
	Then we have for all $n\ge m\ge L$
	\begin{eqnarray*}
	0&\le &d(p_m,p_n) - d(p_n,o) + d(p_m,o) \\
	&=& d(p_m,p_n) + d(p_L, p_m) - d(p_L, p_n) \\
	&& \hspace{3cm}+  d(p_L, p_n) - d(p_n,o) - d(p_L,p_m) + d(p_m,o) \\
	&\le& \epsilon + d(p_L, p_n) - d(p_n,o) - d(p_L,p_m) + d(p_m,o) .
	\end{eqnarray*}
	Taking the limit $n\to\infty$, we obtain the estimate
	\begin{multline*}
	0\le  \liminf_{m\to\infty} f(p_m) + d(p_m,o) \\
	\le \limsup_{m\to\infty} f(p_m) + d(p_m,o) \le \epsilon + f(p_L) - f(p_L) = \epsilon .
	\end{multline*}
	Since $\epsilon>0$ is arbitrary, then $(ii)$ holds true.
\end{proof}

Then, the proof of Corollary~\ref{corbusemann} runs similarly to Theorem 6.5 of \cite{MR2738530}.

\begin{proof}[Proof of  Corollary ~\ref{corbusemann}.]
The horofunctions of type {\bf (nv)} clearly are Busemann points, as they are limit, in particular, of the   Riemannian geodesic rays which are the horizontal half-lines issued from the origin and which are always minimizing, see Proposition~\ref{teoGeoDr} and Corollary~\ref{corGeoDrtypeI} in the Appendix.
On the other hand, consider a horofunction of type {\bf (v)}, $h_u=(v,z) = |u| - |u -v|$, for $u \in W$.
Assume that there exists an almost straightly diverging sequence of points $p_n = v_n + z_nZ$ converging to $h_u$.
By Lemma~\ref{lemmabus} (ii), we deduce that 
\[
\lim_{n\rightarrow \infty} f_u (p_n) + d_R (o,p_n) 
= \lim_{n\rightarrow \infty} |u| - |u - v_n| + d_R (o, p_n) =0 ,
\]
hence $\{v_n\}_{n\in\N}$  is necessarily an unbounded sequence. By Corollary~\ref{TEO3} and the following description of horofunctions, it follows that $h_u$ should be of type {\bf (nv)}, a contradiction.
\end{proof}

\subsubsection{Concluding remarks}

\noindent The  Riemannian Heisenberg group shows a number of counterintuitive features which is worth to stress:

\vspace{1mm}
\noindent (i) 
in view of Corollary~\ref{TEO3}, all Riemannian metrics on $\HH$ with the same associated distance
have the same Busemann functions, though they are not necessarily isometric 
(in contrast, notice that all strictly subRiemannian metrics on  $\HH$ are isometric). 
However, this is not surprising, because all left-invariant Riemannian metrics on $\HH$ are homothetic. 
\vspace{2mm} 

\noindent (ii) 
there exist diverging   sequences of points $\{p_n\}_{n\in\N}$  that {\em visually converge} to a  limit direction $v$ (that is, the minimizing geodesics $\gamma_n$ from $o$ to $p_n$ tend to a limit, minimizing geodesic $\gamma_v$ with initial direction $v$), but whose associated  limit point $h_{\{p_n\}}$ is not given by the limit point  $\gamma_v (+\infty)$ of $\gamma_v$. This happens for all vertically divergent sequences  $\{p_n\}_{n\in\N}$, as the limit geodesic  $\gamma_v$  is horizontal in this case (see Proposition~\ref{teoGeoDr}  and Corollary~\ref{corGeoDr1} in the Appendix).
\vspace{2mm}

\noindent (iii) 
there exist diverging  trajectories   $\{p_n\}_{n\in\N}$, $\{q_n\}_{n\in\N}$ staying at boun\-ded distance from each other, but defining different limit points (e.g., vertically diverging sequences of points with different limit horofunctions).
\vspace{2mm}

\noindent (iv) it is not true that, for a cocompact group of isometries $G$ of $(\HH, \dr)$, the limit set of $G$  (which is the set of accumulation points of an orbit $Gx_0$ in $\de (\HH, \dr)$)  equals the whole Gromov boundary; for instance, the discrete Heisenberg group $G= \HH (\mathbb{Z})$, has a limit set equal to the set of all Busemann points, plus a discrete subset of the interior of the disk boundary $\bar D^2$.
Also, the limit set may depend on the choice of the base point $x_0 \in \HH$.
\vspace{2mm} 

\noindent (v)
The functions appearing in {\bf (nv)} coincide with the  Busemann functions of a Euclidean plane in the direction $R_{\vartheta} (\hat v_\infty)$; that is, the horofunction  $h (v,z)$ associated to a diverging sequence $P_n=(v_n,z_n)$ of $(\HH,\dr)$ is obtained just by dropping the vertical component $z$ of the argument, and then applying to $v$ the usual Euclidean Busemann function in the direction which is opposite to the limit direction of the $v_n$'s, rotated by an angle $\vartheta$ depending on the quadratic rate of divergence of the sequence ($\vartheta$ is zero for points diverging sub-quadratically, and $\vartheta=\pm \pi$ when the divergence is sup-quadratical).

\vspace{2mm}
  These properties mark a remarkable  difference  with the theory of nonpositively curved, simply connected spaces.

\appendix
\section{Length-minimizing curves for \texorpdfstring{$\dcc$ and $\dr$}{dcc and dr} }\label{sec23031017}
In the Heisenberg group, locally length-minimizing curves are smooth solutions of an Hamiltonian system both in the Riemannian and in the subRiemannian case.
Locally length-minimizing curves are also called \emph{geodesics}.

Let $\dr$ be a Riemannian metric on $\HH$ with horizontal space $(\hh,g)$.
Let $V\subset\hh$ be the plane orthogonal to $[\hh,\hh]$ and let $\dcc$ be the strictly subRiemannian metric on $\HH$ with horizontal space $(V,g|_V)$.

Fix a basis $(X,Y,Z)$ for $\hh$ such that $(X,Y)$ is an orthonormal basis of $(V,g|_V)$ and $Z=[X,Y]$.
Set $\zeta = \sqrt{g(Z,Z)}$.

Let $\dcc$ be the strictly subRiemannian metric on $\HH$ with horizontal space $(V,g|_V)$.

The basis $(X,Y,Z)$ induces the exponential coordinates $(x,y,z)$ on $\HH$, i.e.,~$(x,y,z)=\exp(xX+yY+zZ)$.
We will work in this coordinate system.

The Riemannian and subRiemannian length-minimizing curves are known and we recall their parametrization in the following two propositions.

\begin{Prop}[subRiemannian geodesics]\label{teoGeoDc}
 	All the non-constant locally length-minimizing curves of $\dcc$ starting from $0$ and parametrized by arc-length are the following:
	given $k\in\R\setminus\{0\}$ and $\theta\in\R$
	\begin{enumerate}[label=(\alph*)]
	\item[\textsc{(Type I)}] 	The horizontal lines $t\mapsto (t\cos\theta , t\sin\theta, 0)$; 
	\item[\textsc{(Type II)}]	The curves $t\mapsto (x(t),y(t),z(t))$ given by
		\[
		\left\{
		\begin{aligned}
		 	x(t) &= \frac1k\left( \cos\theta(\cos(kt)-1) - \sin\theta \sin(kt) \right)	\\
			y(t) &= \frac1k\left( \sin\theta(\cos(kt)-1) + \cos\theta \sin(kt) \right)	\\
			z(t) &= \frac1{2k} t -\frac1{2k^2}\sin(kt) 
		\end{aligned}
		\right.
		\]
		Here the derivative at $t=0$ is $(-\sin\theta,\cos\theta,0)$.
	\end{enumerate}
\end{Prop}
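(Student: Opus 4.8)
The plan is to derive the parametrization of subRiemannian geodesics by solving the geodesic Hamiltonian system explicitly. First I would observe that, by general sub-Riemannian theory on a three-dimensional contact manifold (or by a direct variational argument), any locally length-minimizing curve of $\dcc$ that is parametrized by arc-length is a normal geodesic, i.e.\ the projection to $\HH$ of a solution of the Hamiltonian system associated to the cometric dual to $(V, g|_V)$; there are no strictly abnormal minimizers in the Heisenberg group because the distribution $V$ is a contact (fat) distribution. In the coordinates $(x,y,z)$, using the left-invariant frame $\hat X = \de_x - \tfrac y2\de_z$ and $\hat Y = \de_y + \tfrac x2\de_z$ already computed in the Preliminaries, the sub-Riemannian Hamiltonian is $H(p,\xi) = \tfrac12\big(\langle\xi,\hat X(p)\rangle^2 + \langle\xi,\hat Y(p)\rangle^2\big)$.

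Next I would write Hamilton's equations. Setting $p_x = \langle\xi,\hat X\rangle$, $p_y = \langle\xi,\hat Y\rangle$ and $p_z = \xi_z$, the functions $p_z$ is a first integral (the $z$-coordinate is cyclic), and one finds $\dot p_x = -k\, p_y$, $\dot p_y = k\, p_x$ with $k := p_z$ constant. Hence $(p_x(t), p_y(t))$ rotates with angular speed $k$; writing the initial covector as $(p_x(0), p_y(0)) = (-\sin\theta, \cos\theta)$ (unit length, so the curve is arc-length parametrized) gives $p_x(t) = -\sin(\theta + kt)$, $p_y(t) = \cos(\theta+kt)$ when $k\neq 0$. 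Then $\dot x = p_x$, $\dot y = p_y$ integrate to the stated trigonometric formulas for $x(t), y(t)$ (the constants of integration fixed by $x(0)=y(0)=0$), and $\dot z = \tfrac12(x\,\dot y - y\,\dot x)$ — equivalently $\dot z = \tfrac12(x p_y - y p_x)$ — integrates to $z(t) = \tfrac1{2k}t - \tfrac1{2k^2}\sin(kt)$ after a routine computation using the already-established relation between $\dot z$ and the balayage area integrand. When $k = 0$ the covector is constant, $(p_x,p_y)=(\cos\theta,\sin\theta)$, and one gets the horizontal straight line $t\mapsto(t\cos\theta, t\sin\theta, 0)$ of \textsc{Type I}; note $z$ stays $0$ because the balayage area of a straight segment through the origin vanishes.

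Finally I would check that this list is exhaustive and that every such curve is genuinely locally length-minimizing. Exhaustiveness follows from the absence of abnormal minimizers plus the fact that every unit initial covector is accounted for by some $(k,\theta)$. Local minimality follows from the standard fact that normal geodesics are locally length-minimizing (small arcs lie in a neighbourhood where the exponential map is a diffeomorphism), which one may invoke as a known result; alternatively, since $\HH$ with $\dcc$ is the classical Heisenberg/Dido situation, local minimality of these circular-arc projections is classical. The main obstacle, if any, is purely bookkeeping: keeping the sign conventions and the factor of $2$ in $\dot z = \tfrac12(x\dot y - y\dot x)$ consistent with the chosen basis $(X,Y,Z)$ and with the exponential coordinates, so that the stated closed forms — in particular the coefficient $\tfrac1{2k}$ in $z(t)$ and the claimed initial velocity $(-\sin\theta,\cos\theta,0)$ — come out exactly as written; there is no conceptual difficulty beyond this careful tracking of constants.
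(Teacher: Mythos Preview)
Your argument is correct: the Hamiltonian computation is set up properly, the absence of strict abnormals on a contact three-manifold is the right reason why every arc-length minimizer is normal, and your integration of $\dot x=p_x$, $\dot y=p_y$, $\dot z=\tfrac12(x\dot y-y\dot x)$ reproduces exactly the formulas in the statement, including the coefficient $\tfrac1{2k}$ in $z(t)$ and the initial velocity $(-\sin\theta,\cos\theta,0)$.

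As for comparison: the paper does not actually prove this proposition. It presents the parametrizations as known facts, prefacing them only with the remark that ``locally length-minimizing curves are smooth solutions of a Hamiltonian system'' and that the subRiemannian problem reduces to Dido's problem (circular arcs in the projection). So your Hamiltonian derivation is precisely the computation the paper gestures at but omits; there is no alternative argument in the paper to contrast with. The only thing one might add, in the spirit of the paper's Section~2, is that the identification of the projected geodesics with circular arcs can also be read off directly from the Dido/balayage-area formulation stated there, which gives an independent (variational, isoperimetric) confirmation of the \textsc{Type II} formulas without writing Hamilton's equations.
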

\begin{Prop}[Riemannian geodesics]\label{teoGeoDr}
 	All non-constant locally length-minimizing curves of $\dr$ parametrized by a multiple of arc-length and starting from $0$ are the following:
	given $k\in\R\setminus\{0\}$ and $\theta\in\R$
	\begin{enumerate}[label=(\alph*)]
	\item[\textsc{(Type 0)}] 	The vertical line $t\mapsto (0,0,t)$;
	\item[\textsc{(Type I)}] 	The horizontal lines $t\mapsto (t\cos\theta , t\sin\theta, 0)$; 
	\item[\textsc{(Type II)}] 	The curves $t\mapsto (x(t),y(t),z(t))$ given by
		\[
		\left\{
		\begin{aligned}
		 	x(t) &= \frac1k\left( \cos\theta(\cos(kt)-1) - \sin\theta \sin(kt) \right)	\\
			y(t) &= \frac1k\left( \sin\theta(\cos(kt)-1) + \cos\theta \sin(kt) \right)	\\
			z(t) &= \frac1{2k} t -\frac1{2k^2}\sin(kt) + \frac{k}{\zeta^2}t
		\end{aligned}
		\right.
		\]
		Here the derivative at $t=0$ is $(-\sin\theta,\cos\theta,\frac k{\zeta^2})$, which has Riemannian length $\sqrt{1+\frac{k^2}{\zeta^2}}$.
	\end{enumerate}
\end{Prop}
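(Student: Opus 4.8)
\textbf{Proof plan for Proposition~\ref{teoGeoDr}.}
The strategy is to set up and solve the geodesic equations for the left-invariant Riemannian metric $g$ on $\HH$ via the Hamiltonian formalism on $T^*\HH$, exactly as in the subRiemannian case of Proposition~\ref{teoGeoDc} but with the vertical direction now contributing to the metric with weight $\zeta^2$. First I would write the Hamiltonian associated to $g$ in the coordinates dual to the left-invariant frame $\hat X,\hat Y,\hat Z$: if $(p_x,p_y,p_z)$ denote the momenta and $P_X,P_Y,P_Z$ the symbols of $\hat X,\hat Y,\hat Z$, then
\[
H = \tfrac12\left( P_X^2 + P_Y^2 + \tfrac1{\zeta^2} P_Z^2 \right),
\qquad
P_X = p_x - \tfrac y2 p_z,\quad P_Y = p_y + \tfrac x2 p_z,\quad P_Z = p_z .
\]
Hamilton's equations then give $\dot p_z = 0$, so $p_z =: k$ is a constant of motion; this is the single parameter governing the non-straight geodesics, together with the rotational parameter $\theta$ coming from the freedom in the initial horizontal direction.

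Next I would integrate the horizontal equations. Writing $u(t)=P_X(\gamma(t))$, $v(t)=P_Y(\gamma(t))$ for the horizontal components of the velocity in the moving frame, a direct computation of $\dot u,\dot v$ from the Poisson brackets $\{P_X,P_Y\}=P_Z$, $\{P_X,P_Z\}=\{P_Y,P_Z\}=0$ shows that $(u,v)$ rotates at constant angular speed proportional to $k$, i.e.\ $\dot u = -\tfrac k{\zeta^2} v$ and $\dot v = \tfrac k{\zeta^2} u$ up to the normalization of the time parameter. Hence $\dot x = u$, $\dot y = v$ describe a circle traversed at constant speed, and integrating with the initial condition $(\dot x(0),\dot y(0))=(-\sin\theta,\cos\theta)$ produces precisely the stated $x(t),y(t)$. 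For the vertical coordinate one has $\dot z = \hat Z$-component of the velocity, which by the relation $\mc(\gamma') = \pi|_V^{-1}(\cdots) + (\text{area rate})Z$ splits into the subRiemannian balayage contribution $\tfrac1{2k}-\tfrac1{2k^2}\cos(kt)$-derivative term plus the extra term $\tfrac k{\zeta^2}$ coming from the $P_Z/\zeta^2$ in $\dot z = \partial H/\partial p_z$; integrating gives $z(t) = \tfrac1{2k}t - \tfrac1{2k^2}\sin(kt) + \tfrac k{\zeta^2}t$. The degenerate cases $k=0$ (horizontal lines, \textsc{Type I}) and the purely vertical momentum direction (\textsc{Type 0}, the one-parameter subgroup $t\mapsto(0,0,t)$, which is a geodesic by the left-invariance and the reflection symmetry fixing the $Z$-axis) are handled separately as limiting or special solutions. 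Finally the Riemannian length of the initial velocity $(-\sin\theta,\cos\theta,\tfrac k{\zeta^2})$ is $\sqrt{1+k^2/\zeta^2}$ by the matrix form of $g$, explaining the parametrization "by a multiple of arc-length".

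The main obstacle is not the integration itself, which is routine once the Hamiltonian is written down, but rather the \emph{completeness} claim: one must argue that the list \textsc{(Type 0)}--\textsc{(Type II)} exhausts \emph{all} locally length-minimizing curves through $0$. This requires observing that $\HH$ with a left-invariant metric is complete, so every geodesic extends to a solution of the Hamiltonian system on all of $\R$; that the solution is determined by its initial momentum covector in $T_0^*\HH\cong\hh^*$; and that modulo the rotational symmetry of $g$ about the $Z$-axis (which acts transitively on directions with a fixed "$k$" and fixed horizontal speed) every initial covector is conjugate to one of the normal forms parametrized by $(k,\theta)$, with $k=p_z$ and $\theta$ the angle of the horizontal part. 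One then invokes that in this setting normal (Hamiltonian) geodesics account for all locally minimizing curves — in the Riemannian case automatically, and in the subRiemannian case because the Heisenberg group has no strictly abnormal minimizers (a standard fact for step-two groups, or directly verifiable here). Collecting these observations yields the stated classification.
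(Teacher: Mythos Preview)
The paper does not actually prove this proposition: it is presented in the appendix as a recalled fact (``The Riemannian and subRiemannian length-minimizing curves are known and we recall their parametrization in the following two propositions''), with no derivation given. Your Hamiltonian approach is the standard way to obtain these formulas and is essentially correct in outline.

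One computational slip worth flagging: with $H=\tfrac12(P_X^2+P_Y^2+\zeta^{-2}P_Z^2)$ and $\{P_X,P_Y\}=-P_Z$ (computed from your expressions for $P_X,P_Y,P_Z$), the evolution of the horizontal momenta is
\[
\dot P_X = \{P_X,H\} = -P_Z\,P_Y = -k\,P_Y,\qquad \dot P_Y = k\,P_X,
\]
so the angular frequency of $(u,v)$ is $k$, not $k/\zeta^2$. The factor $\zeta^{-2}$ enters only through $\dot z$, via the term $\partial H/\partial p_z$ contributing $k/\zeta^2$ to the vertical velocity, which is exactly the extra $\tfrac{k}{\zeta^2}t$ in $z(t)$ compared to the subRiemannian case. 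Your hedge ``up to the normalization of the time parameter'' papers over this, but it is better to get the coefficient right, since the parameter $k$ in the statement is precisely the conserved $p_z$ and the formulas for $x(t),y(t)$ are \emph{identical} to the subRiemannian ones --- only $z(t)$ changes. Also, the remark about abnormal minimizers is irrelevant here: this is the Riemannian case, where all geodesics are normal, so the completeness of the list follows simply from the Hopf--Rinow theorem plus the fact that every initial covector in $T_0^*\HH$ corresponds to one of the listed types.
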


The expression of geodesics helps us to prove the following facts.

\begin{Cor}\label{corGeoDrtypeI}
The horizontal lines of  \textsc{Type I} are globally  $\dr$- and  $\dcc$-length-minimizing curves.
\end{Cor}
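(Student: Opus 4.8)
The plan is to show that a horizontal line $\sigma(t) = (t\cos\theta, t\sin\theta, 0)$, $t\in[0,L]$, realizes the distance both for $\dr$ and for $\dcc$; since $\dcc \ge \dr$ always, it suffices to treat the two metrics together by exhibiting a lower bound $\dr(0,\sigma(L)) \ge L$ matching the obvious upper bound $\dcc(0,\sigma(L)) \le \ell(\sigma) = L$ (the curve $\sigma$ is horizontal for both metrics and has the same length). So the real content is: for every point $p$ on the $XY$-line through the origin in the chosen coordinates, $\dr(0,p) \ge |p|_{\mathrm{eucl}}$, i.e.~the Riemannian geodesics joining $0$ to such $p$ are exactly the straight horizontal segments.

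First I would invoke the classification of Riemannian geodesics in Proposition~\ref{teoGeoDr}: any length-minimizing curve from $0$ to $p$ is of \textsc{Type 0}, \textsc{Type I}, or \textsc{Type II}. A \textsc{Type 0} geodesic stays on the $Z$-axis, so it cannot reach a point $p=(p_1,p_2,0)$ with $(p_1,p_2)\ne(0,0)$; thus either $p=0$ (trivial) or the minimizer is of \textsc{Type I} or \textsc{Type II}. If it is of \textsc{Type I}, it is the straight segment and we are done, with $\dr(0,p) = |p|_{\mathrm{eucl}} = \ell(\sigma)$. The key step is therefore to rule out \textsc{Type II} — or rather to show that if a \textsc{Type II} curve has its endpoint on the horizontal line $\{z=0\}\cap(\hh/[\hh,\hh])$, then it is not shorter than the straight segment. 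From the formula for $z(t)$ in \textsc{Type II}, namely $z(t) = \frac{1}{2k}t - \frac{1}{2k^2}\sin(kt) + \frac{k}{\zeta^2}t$, the condition $z(t)=0$ with $t>0$, $k\ne 0$ forces $\frac{1}{2k}t - \frac{1}{2k^2}\sin(kt) + \frac{k}{\zeta^2}t = 0$; but for $k>0$ every summand is positive for $t>0$ (since $t > \sin(kt)/k$ whenever $kt>0$), and symmetrically for $k<0$, so this is impossible. Hence no \textsc{Type II} geodesic ever returns to the slice $\{z=0\}$ except at $t=0$, and the only minimizers reaching $p$ are the \textsc{Type I} segments.

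The main obstacle is making the dichotomy watertight: one must be sure that Proposition~\ref{teoGeoDr} genuinely lists \emph{all} length-minimizing curves (not merely all geodesics), so that a minimizer from $0$ to $p$ is one of the listed types with the listed endpoint constraint; granting that (it is stated in the Proposition), the argument reduces to the elementary inequality $t > \frac{1}{k}\sin(kt)$ for $kt > 0$, which I would not belabor. One should also note the degenerate consistency check: the point $p = (p_1,p_2,0)$ lies in $\{z=0\}$, and Corollary~\ref{cor1103} (quoted in the proof of Proposition~\ref{prop23031006}) already records $\dcc(0,p) = \dr(0,p)$ there, so global minimality for $\dcc$ follows at once from global minimality for $\dr$. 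Finally, global (as opposed to local) minimality of $\sigma$ on all of $[0,+\infty)$ is immediate, since the restriction to any $[0,L]$ is minimizing by the above and minimality is a local-to-all-subintervals property for length-parametrized curves.
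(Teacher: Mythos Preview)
Your argument is correct. The paper actually states this corollary without proof, placing it immediately after the explicit geodesic formulas with the remark ``The expression of geodesics helps us to prove the following facts''; so your case analysis on \textsc{Type 0}/\textsc{I}/\textsc{II} is exactly the kind of verification the authors had in mind, and your key computation---that $z(t)=\frac{1}{2k^2}\bigl(kt-\sin(kt)\bigr)+\frac{k}{\zeta^2}t$ has the sign of $k$ for all $t>0$, hence a \textsc{Type II} Riemannian geodesic never returns to $\{z=0\}$---is the heart of the matter and is sound. Your sandwich $L\le \dr(0,p)\le \dcc(0,p)\le L$ neatly handles both metrics at once.

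Two minor remarks. First, your phrase ``every summand is positive'' is slightly loose: the term $-\frac{1}{2k^2}\sin(kt)$ is not positive on its own; you mean positivity after grouping it with $\frac{t}{2k}$, as you in fact do. Second, invoking Corollary~\ref{cor1103} as a ``consistency check'' is a bit circular, since that corollary is itself stated without proof and is essentially equivalent to the present one; it is harmless here because you do not use it in the argument.

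As an alternative route that bypasses the geodesic classification entirely: Proposition~\ref{prop23030248} shows that $\hat\pi:(\HH,d)\to(\hhh,\|\cdot\|)$ is a submetry for both $d=\dr$ and $d=\dcc$, hence $d(0,p)\ge \|\hat\pi(p)\|=L$ for $p=(L\cos\theta,L\sin\theta,0)$; combined with the trivial upper bound $d(0,p)\le\ell(\sigma)=L$ this gives the result in one line. This is shorter, but your approach has the advantage of staying entirely within the appendix and making the role of the explicit formulas transparent.
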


\begin{Cor}\label{corGeoDr1}
 	Both $\dr$- and locally $\dcc$-length-minimizing curves $\gamma$ of \textsc{Type II} are not minimizing from $0$ to $\gamma(t)$ if $|t| > \frac{2\pi}{k}$.
\end{Cor}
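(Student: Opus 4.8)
\textbf{Proof strategy for Corollary~\ref{corGeoDr1}.} The plan is to analyze each family of \textsc{Type II} geodesics directly from the explicit parametrizations in Propositions~\ref{teoGeoDc} and~\ref{teoGeoDr}, and to exhibit, for $|t| > \frac{2\pi}{k}$, a strictly shorter curve joining $0$ to $\gamma(t)$. The key structural observation is that the projection $\hat\pi\circ\gamma$ to $\hh/[\hh,\hh]$ traces a circle (of radius $\frac{1}{k}$, traversed at constant speed), so that at parameter $t = \frac{2\pi}{k}$ the projected curve has closed up and returned to the origin $\hat\pi(\gamma(\tfrac{2\pi}{k})) = 0$, while for $t$ slightly larger it has begun a second loop. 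I would first treat the subRiemannian case, where by the Balayage-area characterization (the Lemma after the definition of $\BA$) a $\dcc$-length-minimizing curve from $0$ to $p = (x,y,z)$ projects to an arc of a circle from $0$ to $\hat\pi(p)$ enclosing signed area $z$; once the projected arc exceeds a full revolution, the isoperimetric inequality in the plane (Dido's problem) shows that the same net area can be enclosed by a shorter arc, contradicting minimality.

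For the precise bound I would argue as follows. Suppose $\gamma$ is of \textsc{Type II} with parameter $k > 0$ and that it is minimizing on $[0,t]$ with $t > \frac{2\pi}{k}$. Write $t = \frac{2\pi}{k} + s$ with $s > 0$. The projected curve $\rho = \hat\pi\circ\gamma$ is the circle of radius $\frac1k$ through $0$ run for arclength $\frac{t}{?}$ — one computes its speed from the formula, obtaining that $\rho$ covers the full circle once and then an arc of positive length. Its balayage area is $\BA(\rho) = \pm\frac{\pi}{k^2} + (\text{area of the extra arc, relative to the chord})$, and crucially its Euclidean length is $\ell(\rho) \ge \frac{2\pi}{k} + (\text{length of the extra arc})$. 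Now replace $\rho$ by a \emph{single} arc of an appropriate circle from $0$ to $\rho(t)$ enclosing the same balayage area $\BA(\rho)$: by the isoperimetric inequality this single arc is strictly shorter, since it avoids wrapping a full closed loop which contributes length $\frac{2\pi}{k}$ while contributing area only $\frac{\pi}{k^2}$, whereas a half-circle of radius $r = \sqrt{2}/k$ would already enclose area $\frac{\pi}{k^2}$ using length only $\frac{\sqrt2\,\pi}{k} < \frac{2\pi}{k}$. Lifting the shorter planar curve gives a strictly shorter horizontal curve from $0$ to $\gamma(t)$ — a contradiction. For the Riemannian \textsc{Type II} case the vertical drift term $\frac{k}{\zeta^2}t$ changes $z$ but not the projected circle, and since by Corollary~\ref{corGeoDr} the Riemannian length equals $\sqrt{1+k^2/\zeta^2}$ times the length of $\rho$, the same comparison of projected curves (replacing the over-wound arc by a single isoperimetric arc, then adjusting the vertical coordinate via the lift) yields the contradiction; one must only check that the vertical discrepancy can still be absorbed, which it can because the lift of any planar curve with prescribed balayage area realizes the prescribed $z$-coordinate.

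I would next check the boundary behavior $|t| = \frac{2\pi}{k}$ is genuinely the threshold by noting that at exactly $t = \frac{2\pi}{k}$ the endpoint lies on the $Z$-axis (for $\theta$ chosen so that the extra arc degenerates) and there minimality is borderline; the statement only claims non-minimality for strict inequality, so this case need not be resolved. The case $t < 0$ is symmetric by the time-reversal $t \mapsto -t$, $k \mapsto -k$, which sends \textsc{Type II} geodesics to \textsc{Type II} geodesics.

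\textbf{Main obstacle.} The delicate point is making the isoperimetric comparison fully rigorous when the over-wound arc is only slightly more than one full revolution: here the ``extra arc'' is short and one must verify that the length \emph{saved} by not closing the loop (of order $\frac{2\pi}{k} - \frac{\sqrt2\,\pi}{k}$, a fixed positive fraction of $\frac{2\pi}{k}$) strictly dominates any length that must be \emph{added} to re-enclose the same net area with a single arc. Equivalently, one needs the sharp form of Dido's problem stating that among all planar arcs from $0$ to a fixed point enclosing a fixed signed area, the minimizer is a \emph{single} circular arc subtending an angle at most $2\pi$ — a curve wrapping past a full turn is never optimal. This is classical, but I would either cite it or give the short Lagrange-multiplier argument; it is the one place where a genuine inequality, rather than a formula manipulation, is required.
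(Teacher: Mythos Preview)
Your approach differs substantially from the paper's, and while the $\dcc$ part is salvageable, the $\dr$ part has a genuine gap.

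The paper's argument is a one-line conjugate/cut-locus observation: from the explicit formulas, every $\gamma_{k,\theta}$ with fixed $k$ and arbitrary $\theta$ satisfies $x(2\pi/k)=y(2\pi/k)=0$ and $z(2\pi/k)$ is independent of $\theta$, so the whole one-parameter family meets at a single point on the $Z$-axis at $t=2\pi/k$. A geodesic is never minimizing strictly beyond such a point. This works verbatim for both $\dcc$ and $\dr$.

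Your isoperimetric route is fine for $\dcc$: Dido's problem shows the optimal projection is a single circular arc of angle $\le 2\pi$, so overwinding kills minimality. But your handling of $\dr$ breaks. You write that the vertical discrepancy ``can still be absorbed, \ldots\ because the lift of any planar curve with prescribed balayage area realizes the prescribed $z$-coordinate.'' It does not: the Riemannian geodesic ends at $z$-coordinate $\BA(\hat\pi\circ\gamma)+\tfrac{kt}{\zeta^2}$, and a \emph{horizontal} lift only realizes the balayage part. If instead you choose a horizontal competitor with balayage equal to the full $z$-coordinate of $\gamma(t)$, the competitor can be \emph{longer} than $\gamma$. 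Concretely, at $t=2\pi/k$ one has $\gamma(2\pi/k)=(0,0,\tfrac{\pi}{k^2}+\tfrac{2\pi}{\zeta^2})$, so
\[
\dcc\bigl(0,\gamma(2\pi/k)\bigr)=2\pi\sqrt{\tfrac{1}{k^2}+\tfrac{2}{\zeta^2}}
\quad>\quad
2\pi\sqrt{\tfrac{1}{k^2}+\tfrac{1}{\zeta^2}}=\ell_{\dr}\bigl(\gamma|_{[0,2\pi/k]}\bigr),
\]
and by continuity the same inequality persists for $t$ slightly larger than $2\pi/k$. Thus no horizontal curve beats $\gamma$ in Riemannian length near the threshold, and your comparison cannot close. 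The Riemannian case genuinely needs a different competitor---and the natural one is precisely another $\gamma_{k,\theta'}$, which is the paper's argument.
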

\begin{proof}
 	This statement depends on the fact that, if we fix $k\in\R\setminus\{0\}$, then for all $\theta$ the corresponding length-minimizing curves $\gamma_{k,\theta}$ of \textsc{Type II} meet each other at the point $\gamma_{k,\theta}(2\pi/k) = (0,0,\frac{2\pi}{k^2})$ or $\gamma_{k,\theta}(2\pi/k) = (0,0,\frac{2\pi}{k^2}+\frac{2\pi}{\zeta^2})$.
\end{proof}
\begin{Cor}\label{corGeoDr1bis}
	The locally $\dr$-length-minimizing curve $\gamma$ of \textsc{Type 0}, $t\mapsto(0,0,t)$, is not minimizing from $0$ to $\gamma(t)$ for $|t|>\frac{2\pi}{\zeta}$.
\end{Cor}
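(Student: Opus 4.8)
The plan is to mimic the proof of Corollary~\ref{corGeoDr1}: I would show that past the critical time the \textsc{Type 0} segment is shortcut by a \textsc{Type II} geodesic with the same endpoint. First I would record that along $\gamma(t)=(0,0,t)=\exp(tZ)$ the velocity is the left-invariant field $\hat Z$, so $\|\mc(\gamma'(t))\|=\|Z\|=\zeta$ and the length of $\gamma$ over $[0,t]$ equals $\zeta|t|$; hence it suffices to produce, for $|t|$ large, a curve from $0$ to $(0,0,t)$ of $\dr$-length strictly smaller than $\zeta|t|$. The Lie algebra automorphism $X\mapsto X,\ Y\mapsto -Y,\ Z\mapsto -Z$ is orthogonal for $g$, so it induces an isometry of $(\HH,\dr)$ fixing $0$ and carrying $(0,0,t)$ to $(0,0,-t)$; this lets me assume $t=z_0>0$.

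Next I would invoke Proposition~\ref{teoGeoDr}: a \textsc{Type II} geodesic $\gamma_{k,\theta}$ with $k>0$ first returns to the $Z$-axis at time $2\pi/k$, at $\gamma_{k,\theta}(2\pi/k)=\left(0,0,\frac{\pi}{k^{2}}+\frac{2\pi}{\zeta^{2}}\right)$ (independently of $\theta$), at constant speed $\sqrt{1+k^{2}/\zeta^{2}}$, hence with $\dr$-length $\frac{2\pi}{k}\sqrt{1+k^{2}/\zeta^{2}}$ over $[0,2\pi/k]$. For $z_0>\frac{2\pi}{\zeta^{2}}$ I would pick $k>0$ with $\frac{\pi}{k^{2}}=z_0-\frac{2\pi}{\zeta^{2}}$, so that $\gamma_{k,0}$ joins $0$ to $(0,0,z_0)$ in time $2\pi/k$; the length comparison then collapses to an elementary identity, since substituting this $k$ gives
\[
\left(\frac{2\pi}{k}\sqrt{1+\frac{k^{2}}{\zeta^{2}}}\right)^{2}
=\frac{4\pi^{2}}{k^{2}}+\frac{4\pi^{2}}{\zeta^{2}}
=4\pi z_0-\frac{4\pi^{2}}{\zeta^{2}},
\]
whence
\[
(\zeta z_0)^{2}-\left(\frac{2\pi}{k}\sqrt{1+\frac{k^{2}}{\zeta^{2}}}\right)^{2}
=\zeta^{2}z_0^{2}-4\pi z_0+\frac{4\pi^{2}}{\zeta^{2}}
=\zeta^{2}\left(z_0-\frac{2\pi}{\zeta^{2}}\right)^{2}>0 .
\]
Thus $\gamma_{k,0}$ is strictly shorter than $\gamma|_{[0,z_0]}$, so $\gamma$ does not minimize from $0$ to $\gamma(z_0)$; the symmetry above handles $z_0<0$. (In the arc-length parametrisation $\gamma(s)=(0,0,s/\zeta)$ of the \textsc{Type 0} ray this is precisely the assertion for $|s|>2\pi/\zeta$ stated in the corollary.)

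The step I expect to be the main obstacle — really bookkeeping rather than a genuine difficulty — is the matching $k\leftrightarrow z_0$, because it is also what pins down the critical time: a \textsc{Type II} geodesic out of $0$ meets the $Z$-axis only at heights $\frac{\pi}{k^{2}}+\frac{2\pi}{\zeta^{2}}$ and their positive integer multiples, and these together fill exactly $\left(\frac{2\pi}{\zeta^{2}},\infty\right)$, so the comparison curve is available precisely for $z_0>\frac{2\pi}{\zeta^{2}}$. For $0<z_0\le\frac{2\pi}{\zeta^{2}}$ neither a \textsc{Type I} nor a \textsc{Type II} geodesic from $0$ reaches $(0,0,z_0)$, so by Proposition~\ref{teoGeoDr} and completeness the \textsc{Type 0} segment is the only geodesic between these two points and is therefore minimizing — which also shows the bound is sharp. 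An alternative packaging, even closer to the proof of Corollary~\ref{corGeoDr1}, is to let $k\to\infty$: then $\gamma_{k,\theta}(2\pi/k)\to\gamma(2\pi/\zeta^{2})$ along arcs whose lengths tend to $2\pi/\zeta=\ell(\gamma|_{[0,2\pi/\zeta^{2}]})$, exhibiting $\gamma(2\pi/\zeta^{2})$ as a limit of cut points of $\gamma$.
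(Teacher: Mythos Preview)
Your proof is correct and rests on the same idea as the paper's: use the \textsc{Type II} geodesics, which return to the $z$-axis at height $\frac{\pi}{k^{2}}+\frac{2\pi}{\zeta^{2}}$, as competitors to the vertical segment. The paper argues only by letting $k\to\infty$ and asserting that the vertical line ``cannot be minimizing after $z$'' once another geodesic reaches $(0,0,z)$, leaving the actual length comparison implicit; your direct computation $(\zeta z_0)^{2}-\ell(\gamma_{k,0})^{2}=\zeta^{2}\bigl(z_0-\tfrac{2\pi}{\zeta^{2}}\bigr)^{2}>0$ makes that step explicit and, as a bonus, yields the sharpness of the threshold. You also correctly identify that in the parametrisation $t\mapsto(0,0,t)$ the cut value is $\tfrac{2\pi}{\zeta^{2}}$ (as the paper's own proof concludes), and that the bound $\tfrac{2\pi}{\zeta}$ in the statement matches only after passing to arc length.
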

\begin{proof}
	For $k>0$ let $\gamma_k$ be the $\dr$-length-minimizing curve of \textsc{Type II} with this $k$ and $\theta=0$. 
	Then $(\gamma_k)_3(\frac{2\pi}{k}) = \frac{\pi}{k^2} + \frac{2\pi}{\zeta^2}$.
	Letting $k\to\infty$ we obtain $\hat z:= \frac{2\pi}{\zeta^2}$. 
	This means that for every $\epsilon>0$ there is $ z \le \hat z+\epsilon$ and $k>0$ such that $\gamma_k( \frac{2\pi}{k}) = (0,0,z)$. 
	Therefore $t\mapsto (0,0,t)$ cannot be minimizing after $z$, and therefore after $\hat z$.
\end{proof}
\begin{Cor}\label{corGeoDr2}
 	If $p=(x,y,p_3)$ and $q=(x,y,q_3)$, then
	\[
	\dcc(p,q) = 2\sqrt\pi\cdot\sqrt{|p_3-q_3|} .
	\]
\end{Cor}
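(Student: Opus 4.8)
The goal is to compute $\dcc(p,q)$ for two points $p=(x,y,p_3)$ and $q=(x,y,q_3)$ lying on the same vertical line, and show it equals $2\sqrt{\pi}\sqrt{|p_3-q_3|}$.

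My plan: First I would use left-invariance of $\dcc$ to reduce to the case $p=0$ and $q=(0,0,z)$ with $z=q_3-p_3$. Concretely, translating by $p^{-1}$ on the left sends $p$ to $0$; since $p$ and $q$ differ only in the central coordinate and the center is a normal subgroup on which the group law is just addition, the image of $q$ is $(0,0,q_3-p_3)$. So it suffices to show $\dcc(0,(0,0,z)) = 2\sqrt{\pi}\sqrt{|z|}$, and by the symmetry $z \mapsto -z$ (or by noting both points are on the axis) we may assume $z > 0$; the case $z=0$ is trivial.

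Next I would invoke the Dido-type characterization from the Lemma at the end of Section~\ref{sec14031114} (or equivalently Proposition~\ref{teoGeoDc}): a $\dcc$-length-minimizing curve from $0$ to $(0,0,z)$ projects under $\hat\pi$ to a closed loop in $\hhh$ (a circle, since the endpoint has zero horizontal component) enclosing signed area equal to $z=\BA(\rho)$, and its $\dcc$-length equals the Euclidean length of that loop. Thus $\dcc(0,(0,0,z))$ is the minimal perimeter of a planar loop enclosing area $z$, which by the classical isoperimetric inequality is achieved by a circle of area $z$, hence radius $r=\sqrt{z/\pi}$ and perimeter $2\pi r = 2\pi\sqrt{z/\pi} = 2\sqrt{\pi z} = 2\sqrt{\pi}\sqrt{z}$. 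Alternatively, one reads this directly off the \textsc{Type II} geodesics of Proposition~\ref{teoGeoDc}: taking the endpoint on the $Z$-axis forces $kt = 2\pi$, so the length is $t = 2\pi/k$ while $z(t) = \frac{t}{2k} = \frac{\pi}{k^2}$, giving $k = \sqrt{\pi/z}$ and length $2\pi/k = 2\sqrt{\pi z}$.

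I expect no serious obstacle here; the only point requiring a little care is the reduction step — making sure that left-translation by $p^{-1}$ really sends the vertical line through $p$ to the vertical line through $0$ in the right way, which follows because $[\hh,\hh]$ is central so that in exponential coordinates $p^{-1}\cdot(x,y,q_3) = (0,0,q_3-p_3)$ by the Baker--Campbell--Hausdorff formula (the quadratic correction terms cancel). After that, everything is the planar isoperimetric inequality, which I would simply cite, or the explicit geodesic formula, which is a one-line computation.
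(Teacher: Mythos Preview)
Your proposal is correct and matches the paper's own proof essentially line for line: the paper also reduces by left-invariance to $\dcc(0,(0,0,z))$ and then reads off the answer from the fact that the minimizing projection is a full circle of area $|z|$, giving perimeter $2\sqrt{\pi}\sqrt{|z|}$. Your alternative computation via the \textsc{Type II} parameters ($kt=2\pi$, $z=\pi/k^2$) is an equally valid variant of the same idea.
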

\begin{proof}
 	First suppose $p=0$: we have to prove that $\dcc(0,(0,0,z))= 2\sqrt\pi \sqrt{|z|}$. 
	This is done by looking at the length-minimizing curves: they comes from complete circle of perimeter $2\pi R=d$ and area $\pi R^2=|z|$, so that $\dcc(0,(0,0,z))=2\pi\sqrt{\frac{|z|}{\pi}} = 2\sqrt\pi\sqrt{|z|}$.
	
	The general case follows from the left-invariance of $\dcc$: 
	\begin{align*}
	\dcc((x,y,p_3),(x,y,q_3)) 
	&= \dcc (0,(x,y,p_3)^{-1}(x,y,q_3)) \\
	&=\dcc(0,(0,0,q_3-p_3)) .
	\end{align*}
\end{proof}
\begin{Cor}\label{cor1103}
 	If $p\in\{z=0\}$, then
	\[
	\dcc(0,p) = \dr(0,p)
	\]
\end{Cor}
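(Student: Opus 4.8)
\textbf{Proof proposal for Corollary~\ref{cor1103}.} The plan is to compare length-minimizing curves for $\dr$ and $\dcc$ joining $0$ to a point $p$ in the plane $\{z=0\}$. We already know from the general inequality recorded in the proof of Proposition~\ref{prop23031006} that $\dr(0,p)\le\dcc(0,p)$ for every $p$, since both distances are length metrics for the same length functional $\ell$ and $\dcc$ minimizes over a strictly smaller class of curves (the horizontal ones). So the only thing to prove is the reverse inequality $\dcc(0,p)\le\dr(0,p)$ when $p=(x,y,0)$.

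First I would dispose of the trivial case $p=0$, and then treat $p\neq 0$. The key observation is that $p=(x,y,0)$ lies on a horizontal line through $0$, namely the \textsc{Type I} geodesic $t\mapsto(t\cos\theta,t\sin\theta,0)$ with $\theta$ chosen so that $(\cos\theta,\sin\theta)$ points toward $(x,y)$, and this line is both a $\dr$-geodesic and a $\dcc$-geodesic by Proposition~\ref{teoGeoDr} (\textsc{Type I}) and Proposition~\ref{teoGeoDc} (\textsc{Type I}). By Corollary~\ref{corGeoDrtypeI}, the horizontal lines of \textsc{Type I} are \emph{globally} length-minimizing for both $\dr$ and $\dcc$. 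Hence the segment of this line from $0$ to $p$ realizes $\dcc(0,p)=\sqrt{x^2+y^2}$ and also realizes $\dr(0,p)=\sqrt{x^2+y^2}$, so the two distances agree. In fact this gives both inequalities at once, so one could even bypass the preliminary remark.

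The only point that requires a little care — and I expect it to be the main (minor) obstacle — is justifying that the \textsc{Type I} horizontal line really is the $\dr$-minimizer, i.e.\ that no \textsc{Type 0} or \textsc{Type II} Riemannian geodesic does better. This is exactly the content of Corollary~\ref{corGeoDrtypeI}, which I am entitled to assume; alternatively one notes that a point with $z=0$ cannot be an endpoint of a \textsc{Type II} geodesic that is still minimizing (a short arc of \textsc{Type II} with $z$-coordinate returning to $0$ would have to be almost a full period, and Corollary~\ref{corGeoDr1} rules out minimality past $|t|=2\pi/k$), and \textsc{Type 0} only reaches the $Z$-axis. Therefore the \textsc{Type I} line is the $\dr$-geodesic to $p$, its length equals the Euclidean norm of $(x,y)$, and the same length is achieved by the same curve viewed as a $\dcc$-geodesic, giving $\dcc(0,p)=\dr(0,p)$.
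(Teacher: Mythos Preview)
Your argument is correct and is exactly the intended one: the paper states Corollary~\ref{cor1103} without proof, as an immediate consequence of Corollary~\ref{corGeoDrtypeI} (the horizontal \textsc{Type I} line from $0$ to $p=(x,y,0)$ is globally length-minimizing for both $\dr$ and $\dcc$, so both distances equal $\sqrt{x^2+y^2}$). Your parenthetical alternative about \textsc{Type II} geodesics is unnecessary once you invoke Corollary~\ref{corGeoDrtypeI}; incidentally, for the Riemannian \textsc{Type II} curves one has $z'(t)=\frac{1-\cos(kt)}{2k}+\frac{k}{\zeta^2}$, which has a fixed sign, so $z$ never returns to $0$ at all---but this refinement is not needed for the proof.
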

\begin{Cor}\label{corGeoDr}
	$\dr$- and $\dcc$-length-minimizing curves of \textsc{Type II} are in bijection with the following rule: If $\eta:[0,T]\to\HH$ is a $\dcc$-length-minimizing curve of \textsc{Type II}, then
	\[
	\gamma(t) = \eta(t) + \left(0 ,0, \frac{kt}{\zeta^2} \right)
	\]
	is $\dr$-length-minimizing of \textsc{Type II}, where $k\in\R$ is given by $\eta$.
	Moreover, it holds
	\[
	\|\mc(\gamma')\|^2 = 1 + \frac{k^2}{\zeta^2}
	\]
	and 
	\[
	\dcc(\gamma(t),\eta(t)) = 2\sqrt{\pi} \sqrt{\frac{kt}{\zeta^2} }
	\]
\end{Cor}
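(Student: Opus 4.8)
The plan is to read off every assertion directly from the explicit parametrizations in Propositions~\ref{teoGeoDc} and~\ref{teoGeoDr}. A \textsc{Type II} $\dcc$-geodesic $\eta$ is, by Proposition~\ref{teoGeoDc}, determined by a pair $(k,\theta)$ with $k\neq0$ (and $k$ is intrinsically recovered from $\eta$, for instance as the reciprocal of the radius of the circle $\hat\pi\circ\eta$), with coordinates $\eta(t)=(x(t),y(t),z(t))$, where $z(t)=\frac1{2k}t-\frac1{2k^2}\sin(kt)$. The \textsc{Type II} $\dr$-geodesic $\gamma$ with the same $(k,\theta)$ has, by Proposition~\ref{teoGeoDr}, identical $x$- and $y$-coordinates and $z$-coordinate $z(t)+\frac{k}{\zeta^2}t$. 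Since $Z$ is central, translating the exponential $z$-coordinate by $\frac{kt}{\zeta^2}$ coincides with multiplication by the central element $\exp(\frac{kt}{\zeta^2}Z)=(0,0,\frac{kt}{\zeta^2})$, so $\gamma(t)=\eta(t)+(0,0,\frac{kt}{\zeta^2})=\eta(t)\cdot(0,0,\frac{kt}{\zeta^2})$. Conversely, subtracting $\frac{kt}{\zeta^2}$ from the $z$-coordinate of a \textsc{Type II} $\dr$-geodesic produces a \textsc{Type II} $\dcc$-geodesic with the same $(k,\theta)$; hence $\eta\mapsto\gamma$ is a bijection between the two families of \textsc{Type II} geodesics starting at $0$, on any fixed interval $[0,T]$.

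For the norm identity, the derivative of $\gamma$ at $t=0$ is $\mc(\gamma'(0))=-\sin\theta\,X+\cos\theta\,Y+\frac{k}{\zeta^2}Z$ by Proposition~\ref{teoGeoDr}, and $\|\mc(\gamma'(t))\|$ is independent of $t$ because $\gamma$ is parametrized by a multiple of arc-length and $g$ is left-invariant. Since $g$ is represented by $\mathrm{diag}(1,1,\zeta^2)$ in the basis $(X,Y,Z)$, I get
\[
\|\mc(\gamma')\|^2=\sin^2\theta+\cos^2\theta+\zeta^2\cdot\frac{k^2}{\zeta^4}=1+\frac{k^2}{\zeta^2},
\]
which also recovers the speed $\sqrt{1+k^2/\zeta^2}$ already recorded in Proposition~\ref{teoGeoDr}.

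For the last identity I would use left-invariance of $\dcc$: $\dcc(\gamma(t),\eta(t))=\dcc(0,\eta(t)^{-1}\gamma(t))$, and $\eta(t)^{-1}\gamma(t)=(0,0,\frac{kt}{\zeta^2})$ by the centrality computation above, so Corollary~\ref{corGeoDr2} gives
\[
\dcc(\gamma(t),\eta(t))=\dcc\bigl(0,(0,0,\tfrac{kt}{\zeta^2})\bigr)=2\sqrt{\pi}\,\sqrt{\bigl|\tfrac{kt}{\zeta^2}\bigr|},
\]
which is the stated formula when $k>0$ and $t\geq0$. There is no real obstacle here; the only points that need a little attention are the identification of $z$-coordinate translation with multiplication by a central group element, and keeping the two parametrization conventions (arc-length for $\eta$, a multiple of arc-length for $\gamma$) consistent — which is exactly what the norm computation pins down.
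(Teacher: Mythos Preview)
Your proof is correct and follows essentially the same approach as the paper's own proof: both read everything off from the explicit parametrizations in Propositions~\ref{teoGeoDc} and~\ref{teoGeoDr}, and the norm computation is the same (the paper phrases it as $\mc(\gamma')=\mc(\eta')+\tfrac{k}{\zeta^2}Z$ with $\mc(\eta')\perp Z$, which is exactly your computation at $t=0$). Your extra remarks about centrality of $Z$ and the use of Corollary~\ref{corGeoDr2} for the last identity simply make explicit what the paper leaves implicit.
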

\begin{proof}
 	All the statements come directly from the expression of the geodesics.
	Notice that a $\dcc$-length-minimizing curve $\eta$ of \textsc{Type II} is parametrized by arc-length, i.e., $\|\mc(\eta')\|\equiv1$. 
	
	On the other hand, the corresponding $\dr$-length-minimizing curve $\gamma$ has derivative $\mc(\gamma')=\mc(\eta') + \frac k{\zeta^2} Z$, where $\mc(\eta')$ is orthogonal to $Z$.
	Hence $\|\mc(\gamma')\|^2 = 1 + \frac{k^2}{\zeta^2}$
\end{proof}

\printbibliography

\end{document}